\newtheorem{thm}{Theorem}
\newtheorem{thmA}{Theorem}
\newtheorem{prop}[thm]{Proposition}
\newtheorem{lem}[thm]{Lemma}
\newcommand{\rk}{\operatorname{rk}}
\newcommand{\crk}{\operatorname{crk}}
\newcommand{\cn}{\operatorname{cn}}
\begin{document}

\title[Covering numbers of unipotent conjugacy classes]{Covering numbers of unipotent conjugacy classes in simple algebraic groups}
\author{Iulian I. Simion}
\address[Iulian I. Simion]
        { Department of Mathematics\\
          Babeș-Bolyai University\\
          Str. Ploieşti 23-25, Cluj-Napoca 400157, Romania\\
          and
          Department of Mathematics\\
          Technical University of Cluj-Napoca\\
          Str. G. Bari\c tiu 25, Cluj-Napoca 400027, Romania}
\email{iulian.simion@ubbcluj.ro}
\thanks{I am grateful to Prof. Attila Mar\'oti for many discussions on this topic and to the referees for their remarks and suggestions. This work was supported by a grant of the Ministry of Research, Innovation and Digitalization, CNCS/CCCDI–UEFISCDI, project number PN-III-P1-1.1-TE-2019-0136, within PNCDI III}
%\subjclass[2020]{Primary 20G99; Secondary 05E16}
%\keywords{conjugacy class, simple algebraic group, Dynkin diagram}

\maketitle
%\tableofcontents
\begin{abstract}
  For simple algebraic groups defined over algebraically closed fields of good characteristic, we give upper bounds on the covering numbers of unipotent conjugacy classes in terms of their (co)ranks and in terms of their dimensions.
  %a sufficient condition in terms of certain marked diagrams, for a product of normal subsets in $G$ to be equal to $G$.
\end{abstract}

%\tableofcontents

\section{Introduction}

The covering number $\cn(G,S)$ of a subset $S$ of a group $G$ is the smallest integer $k$ such that $S^k=G$ or $\infty$ if no such $k$ exists. By a theorem of Liebeck and Shalev \cite[Theorem 1.1]{Liebeck_Shalev}, there is a constant $c$ such that whenever $C$ is a non-central conjugacy class of a non-abelian finite simple group $G$ we have $\cn(G,C)\leq c \cdot (\log_{2} |G|/ \log_{2} |C|)$.

Let $G$ be a simple algebraic group and let $C$ be a non-central conjugacy class of $G$. By results of Gordeev \cite{Gordeev_1_2}, if $G$ is defined over an algebraically closed field of characteristic $0$ then $\cn(G,C)\leq 4\cdot\rk(G)$ where $\rk(G)$ is the Lie rank of $G$. This result was extended by Ellers, Gordeev and Herzog \cite{Ellers_Gordeev_Herzog} to the case of quasisimple Chevalley groups. More precisely, they show that for such a group $G$ we have $\cn(G,C)\leq 2^{13}\cdot \rk(G)$. The generic upper bound for the covering number of a conjugacy class is linear in the Lie rank of $G$. Gordeev and Saxl \cite{Gordeev_Saxl_1} show that a similar upper bound holds for the extended covering number. In particular for a Chevalley group $G$ defined over an algebraically closed field they obtain $\cn(G,C)\leq 4\cdot \rk(G)$.

Throughout this paper $G$ denotes a simple algebraic group defined over an algebraically closed field of characteristic $p$. We are interested in upper bounds on $\cn(G,C)$ which take into account `the size' of $C$ as in \cite[Theorem 1.1]{Liebeck_Shalev}. The question of finding such a bound - with an explicit constant - should involve a classification of the conjugacy classes in $G$. We assume throughout that $p$ is a good prime for $G$, i.e. $p\neq 2$ if $G$ is not of type $A$, $p\neq 3$ if $G$ is an exceptional group and $p\neq 5$ if $G$ is of type $E_8$. We impose this restriction on $p$ in order to make use of the Bala-Carter-Pommerening classification of unipotent conjugacy classes \cite{Bala_Carter,Pommerening} (see also \cite[Theorem 5.9.6 and \S5.11]{Carter_Finite}). This classification reduces the study of unipotent conjugacy classes to the study of distinguished conjugacy classes. Recall that a unipotent element is distinguished if $C_G(u)^{\circ}$ is unipotent. Our first result gives an upper bound on the covering number of distinguished conjugacy classes.

\begin{thmA}
  \label{thm_distinguished}
  There is a constant $c$ such that for any simple algebraic group $G$ defined over a field of good characteristic and any distinguished unipotent conjugacy class $C$ of $G$ we have
  $$
  \cn(G,C)\leq c.
  $$
  Moreover, we may choose $c= 2^3\cdot 3^2$.%256$.
\end{thmA}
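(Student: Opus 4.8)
The plan is to reduce the statement to a single dimension estimate and then to convert that estimate into a covering bound by a soft geometric argument. Write $C=u^{G}$ for a distinguished unipotent class, put $\mathfrak{g}=\operatorname{Lie}(G)$, and recall that by Bala--Carter--Pommerening (valid because $p$ is good) the class $C$ carries an $\mathfrak{sl}_2$-triple $(e,h,f)$ and a weighted Dynkin diagram whose combinatorics agree with the characteristic-$0$ picture; in particular the $\operatorname{ad}h$-grading $\mathfrak{g}=\bigoplus_{i}\mathfrak{g}_i$ and the centralizer dimension $\dim C_G(u)=\dim\mathfrak{g}_0$ may be read off as over $\mathbb{C}$. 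The type $A$ case is already covered, since there the only distinguished unipotent class is the regular one, so the essential content is the families $B$, $C$, $D$ together with the finitely many exceptional classes.

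The key input is that a distinguished element is \emph{even} and satisfies $\dim\mathfrak{g}_0=\dim\mathfrak{g}_2$. Granting this, since $\dim\mathfrak{g}_{2i}=\dim\mathfrak{g}_{-2i}$ and $\dim C_G(u)=\dim\mathfrak{g}_0$, I would compute
$$\dim C \;=\; 2\sum_{i\ge 1}\dim\mathfrak{g}_{2i}\;\ge\;2\dim\mathfrak{g}_2\;=\;2\dim\mathfrak{g}_0\;=\;2\bigl(\dim G-\dim C\bigr),$$
whence $\dim C\ge \tfrac{2}{3}\dim G$. Thus every distinguished class occupies at least two thirds of the dimension of $G$, uniformly in the rank. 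Establishing $\dim\mathfrak{g}_0=\dim\mathfrak{g}_2$ uniformly in good characteristic is the first thing to pin down: for the classical families it follows from the partition description of distinguished orbits (distinct parts of the relevant parity), and for the exceptional groups it is a finite check against the tables.

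To pass from dimension to covering, I would first make a bounded power of $C$ dense and then invoke the elementary fact that in a connected algebraic group any two dense constructible sets $U_1,U_2$ satisfy $U_1U_2=G$ (two dense open subsets of the irreducible variety $G$ meet after a translation). For density, consider the multiplication map $m\colon C\times C\to G$: at a generic point its differential has image $V+\operatorname{Ad}(g)V$, where $V$ is the identity-translate of $T_uC=\operatorname{Im}(\operatorname{Ad}(u)-1)$ and $g$ varies over $G$, and the annihilator of $V$ is $C_{\mathfrak{g}}(u)$, of dimension at most $\tfrac13\dim G$. Because $2\dim C\ge \tfrac{4}{3}\dim G>\dim G$, for $g$ in general position the two conjugate copies of $V$ span $\mathfrak{g}$, so $m$ is dominant and $C^{2}$ contains a dense open set; doubling then gives $C^{4}=G$. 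The explicit constant $2^{3}\cdot 3^{2}$ I expect to arise by replacing this optimistic count with one that is safe in every good characteristic and in small rank, i.e.\ by allowing a bounded number $m$ of factors before density is certified and then applying the product-of-dense-sets step.

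The main obstacle is making the density step rigorous and uniform. Two points need care: the intersection $C_{\mathfrak{g}}(u)\cap\operatorname{Ad}(g)C_{\mathfrak{g}}(u)$ must vanish for generic $g$, which requires the $G$-orbit of the subspace $C_{\mathfrak{g}}(u)$ in the Grassmannian to be large enough that two generic translates are transverse; and in characteristic $p$ one must control separability of $m$, identifying $\operatorname{Im}(\operatorname{Ad}(u)-1)$ with $\operatorname{Im}(\operatorname{ad}e)$ via a Springer isomorphism. Where transversality is not transparent---typically for the smaller distinguished classes and in low rank---I would fall back on the linear bound $\cn(G,C)\le 4\,\rk(G)$ of Gordeev--Saxl for the bounded-rank cases and reserve the dimension argument for large rank, choosing the number of factors so that the two regimes glue into the single explicit constant $c=2^{3}\cdot 3^{2}$.
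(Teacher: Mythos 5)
Your dimension estimate is correct and is a genuinely nice uniform observation: a distinguished class is even, and the defining property of a distinguished parabolic ($\dim L=\dim Q/[Q,Q]$, i.e. $\dim\mathfrak{g}_0=\dim\mathfrak{g}_2$ in the associated grading) gives exactly $\dim C\geq\tfrac{2}{3}\dim G$ in all types. The genuine gap is the step you yourself flag as the ``main obstacle'': dominance of $m\colon C\times C\to G$ simply does not follow from $2\dim C>\dim G$. What you need is $\mathfrak{c}_{\mathfrak g}(u)\cap\operatorname{Ad}(g)\,\mathfrak{c}_{\mathfrak g}(u)=0$ for some $g\in G$, and the subspaces $\operatorname{Ad}(g)\,\mathfrak{c}_{\mathfrak g}(u)$ do not range over a dense subset of the Grassmannian: they form a single $G$-orbit, of dimension at most $\dim G$, inside a Grassmannian whose dimension is roughly $\tfrac{2}{9}(\dim G)^2$, so ``two generic members of the family are transverse'' is not a genericity statement one gets for free; it is a property of the particular orbit geometry and is exactly the content to be proved. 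Your fallback does not close this hole: the Gordeev--Saxl bound $\cn(G,C)\leq 4\rk(G)$ is linear in the rank, hence unbounded, so it can only absorb the bounded-rank cases; for unbounded rank your argument rests entirely on the unproved transversality, i.e. precisely where Theorem \ref{thm_distinguished} has content you have no proof. The constant $2^3\cdot 3^2$ is likewise never derived, only predicted.

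It is worth saying that the gap is fillable, and filling it would give a proof genuinely different from (and sharper than) the paper's. In good characteristic unipotent centralizers are smooth, and a distinguished $u$ has an associated parabolic $P=LQ$ with $\mathfrak{c}_{\mathfrak g}(u)\subseteq\operatorname{Lie}(Q)$: the centralizer is stable under the associated cocharacter, it has no negative degrees, and its degree-$0$ part is $\operatorname{Lie}(C_L(u))=0$ because the dense-orbit condition forces $C_L(u)$ to be finite. Taking $g=\dot w_0$, the space $\operatorname{Ad}(\dot w_0)\operatorname{Lie}(Q)$ is a sum of negative root spaces while $\operatorname{Lie}(Q)$ is a sum of positive ones, so the intersection vanishes, $m$ is dominant at $(u,\dot w_0u\dot w_0^{-1})$, and your two-dense-sets lemma then yields $C^4=G$ with no case analysis --- compared with the paper, which proves Proposition \ref{classical_G} by explicit root-system combinatorics (distinguished diagrams, the $A_{2d+1}$ subsystem of Lemma \ref{A2d_rootgroups}), places an $\rk(G)$-dimensional torus inside $C^{36}$, and concludes $C^{72}=G$ via Steinberg's density theorem. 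One would still need to handle the passage from kernels to images of $\operatorname{Ad}(u)-1$ (via a nondegenerate invariant form, which requires care in type $A$ when $p$ divides $n$), and note that your remark that type $A$ ``is already covered'' by Lev's theorem holds only for adjoint groups. As written, however, the proposal is an outline whose central step is missing rather than a proof.
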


For bounded rank, in particular for exceptional algebraic groups, the above result and Theorems \ref{thm_A} and \ref{thm_B} below follow from \cite{Gordeev_Saxl_1}. It is worth noticing that if $G$ is adjoint of type $A$ and if $C$ is the regular unipotent conjugacy class then $C^2=G$ by \cite{Lev}. In other words, for such groups the constant in Theorem \ref{thm_distinguished} is $2$.

%$F$ is an algebraically closed field and if $C$ is the regular unipotent conjugacy class of $G=\operatorname{PGL}_n(F)$ then $C^2=G$ by \cite[Theorem 5]{Lev}. In other words, for such groups the constant in Theorem \ref{thm_distinguished} is $2$.

The rank $\rk(H)$ of an algebraic group $H$ is the dimension of a maximal torus of $H$. Let $C$ be the conjugacy class of the unipotent element $u\in G$. We define the \emph{corank of $C$} to be $\crk(C):=\rk(C_G(u))$. Further, we define the \emph{rank of $C$} to be $\rk(C):=\rk(G)-\crk(C)$. The second result gives an upper bound for $\cn(G,C)$ in terms of the rank and the corank of $C$.

\begin{thmA}
  \label{thm_A}
  There is a constant $c$ such that for any simple algebraic group $G$ defined over a field of good characteristic and any unipotent conjugacy class $C$ of $G$ we have
%  Let $G$ be a simple algebraic group defined over a field of good characteristic. If $C$ is a unipotent conjuacy class in $G$ then
  $$
  \cn(G,C)\leq c\cdot \frac{\rk(G)}{\rk(C)}=
  c\cdot \left(1+\frac{\crk(C)}{\rk{(C)}}\right).
  $$
  Moreover, we may choose $c=2^5\cdot 3^2$.%2^5\cdot 3^3$.%$c= 2^9\cdot 5$.%2112$.
\end{thmA}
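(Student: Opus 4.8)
The plan is to reduce to the distinguished case treated in Theorem \ref{thm_distinguished} by means of the Bala--Carter--Pommerening classification, letting the factor $\rk(G)/\rk(C)$ measure how many conjugates of $C$ are needed to build up from a Levi subgroup to the whole group. Fix $u \in C$ unipotent; if $u = 1$ the asserted bound is infinite and there is nothing to prove, so assume $u \neq 1$. By Bala--Carter there is a Levi subgroup $L$ of $G$ in which $u$ is distinguished, and I would first record the rank bookkeeping: since $u$ is distinguished in $L$, the group $C_L(u)^{\circ}$ is unipotent, so a maximal torus of $C_G(u)$ is $Z(L)^{\circ}$, whence $\crk(C) = \dim Z(L)^{\circ} = \rk(G) - \rk(L')$ and therefore
$$\rk(C) = \rk(L'),$$
the semisimple rank of $L$. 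Writing $\ell = \rk(G)$ and $r = \rk(C) = \rk(L')$, the target becomes $\cn(G,C) \le 2^5 3^2 \cdot \ell / r$.

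Next I would dispose of the groups of bounded rank. For exceptional $G$, and more generally whenever $\ell$ is bounded, the bound $\cn(G,C) \le 4\,\rk(G)$ of Gordeev--Saxl \cite{Gordeev_Saxl_1} already gives $\cn(G,C) \le 4\ell \le (2^5 3^2)\cdot \ell/r$ since $r \ge 1$; so the real work is for classical groups of large rank. There I would set up the central engine of the proof. Inside $L$ the element $u$ is distinguished, so Theorem \ref{thm_distinguished}, applied factor by factor to the simple components of the semisimple group $L'$, yields $\cn(L', C_L) \le 2^3 3^2$, where $C_L$ is the $L$-class of $u$. The idea is then to place several $G$-conjugates of $L'$ in pairwise commuting general position so that they generate a reductive subgroup $M$ of rank close to $\ell$, with $M'$ a central product $M_1 \cdots M_k$ of conjugates of $L'$ and $k \approx \ell/r$. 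Because a single copy of $u$ sitting in one factor $M_j$ (and trivial elsewhere) is $G$-conjugate to $u$, the ``diagonal'' distinguished element of $M'$ is a product of $k$ elements of $C$; running this over all of $M'$ shows $C_M \subseteq C^{k}$ for the $M'$-class $C_M$ of that diagonal element, and hence, by Theorem \ref{thm_distinguished} applied in $M'$,
$$M' \;=\; C_M^{\,2^3 3^2} \;\subseteq\; C^{\,k\cdot 2^3 3^2}.$$

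It then remains to pass from the large-rank subgroup $M'$ to $G$. Here I would arrange the construction so that $M$ has full rank $\ell$ and $M'$ contains a regular (or at least distinguished) unipotent element of $G$; one further application of Theorem \ref{thm_distinguished} to that class of $G$ (or, in type $A$, the sharper $C_{\mathrm{reg}}^2 = G$ of \cite{Lev}) then finishes in a bounded number of additional multiplications. Collecting constants, one obtains $\cn(G,C) \le 2^3 3^2\,(k + O(1))$ with $k \le \lceil \ell/r\rceil$; since $\lceil \ell/r\rceil + O(1) \le 4\,\ell/r$ for $r \le \ell$, this yields the stated $\cn(G,C) \le 2^5 3^2 \cdot \ell/r$, the surplus factor $4 = 2^5/2^3$ absorbing the ceiling together with the finishing rounds.

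The main obstacle I anticipate is the uniformity of the middle construction: producing, for every Dynkin type and every choice of the Bala--Carter Levi $L$, a system of commuting conjugates of $L'$ whose total rank is within a constant factor of $\ell$ and whose product $M'$ captures a distinguished class of $G$, all with $k = O(\ell/r)$ factors. The root-subsystem combinatorics do not tile the Dynkin diagram cleanly in general, so I expect this step to require a case analysis by type, with the classical groups handled by explicit block constructions at the level of Jordan partitions (so that the number of conjugates needed to merge the blocks of $u$ into a regular unipotent is controlled by $\ell/r$), and the exceptional and small-rank cases absorbed into the Gordeev--Saxl bound as above.
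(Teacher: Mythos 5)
Your bookkeeping ($\rk(C)=\rk(L')$, matching Proposition \ref{rank_Levi_center}), the bounded-rank reduction via Gordeev--Saxl, and the first half of your engine are sound: the diagonal element of a central product $M'=M_1\cdots M_k$ of pairwise commuting $G$-conjugates of $L'$ is distinguished in $M'$, its $M'$-class lies in $C^{k}$ because $C^{k}$ is a normal subset of $G$, and Theorem \ref{thm_distinguished} applied factor by factor gives $M'\subseteq C^{2^3 3^2\,k}$. The gap is in the endgame, and it is twofold. First, the premise that $M'$ can be arranged to have full rank and to contain a distinguished (let alone regular) unipotent element of $G$ is false in general: commuting conjugates of $L'$ are orthogonal subsystem subgroups, and in type $A_\ell$ a copy of $A_r$ occupies $r+1$ of the $\ell+1$ indices, so $k$ pairwise commuting copies force $k(r+1)\le \ell+1$, whence $\rk(M')=kr\le \ell+1-k$. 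The deficit $\ell-\rk(M')$ is at least $k-1$ (about $\ell/2$ when $r=1$), never $O(1)$ once $k\ge 2$. Since $\rk(M')<\ell$, the centralizer $C_G(M')$ contains a positive-dimensional torus lying in $C_G(v)$ for every $v\in M'$, so no element of $M'$ is distinguished in $G$ and your final application of Theorem \ref{thm_distinguished} has nothing to apply to. Second, even granting that premise, your constant accounting is wrong: covering numbers compose \emph{multiplicatively} through an intermediate class, not additively. From $C'\subseteq C^{72k}$ (where $C'$ is the $G$-class of your element of $M'$) and $G=(C')^{72}$ you get $G=C^{72\cdot 72\,k}$, i.e. a constant $2^6\cdot 3^4$, not $2^3 3^2\,(k+O(1))$; the claimed $c=2^5\cdot 3^2$ would not follow.

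The paper avoids both problems by propagating a \emph{torus} rather than a semisimple subgroup. Proposition \ref{classical_G} (its machinery applied to the distinguished class of $u$ inside the Levi) places a torus $\tilde T$ of dimension $\rk(C)$ inside $C^{36}$; then, working inside a fixed maximal torus $T$, one multiplies roughly $\rk(G)/\rk(C)$ Weyl-group conjugates of (an enlargement $T'$ of) $\tilde T$. This is the key point your construction is missing: products of subtori of $T$ are again subtori, so the conjugates may \emph{overlap} --- no tiling, no commuting, no exact rank condition is needed --- and each new conjugate costs only an additive $72$ in the exponent. This yields a full maximal torus inside $C^{72(s+1)}$ with $s+1\le \rk(G)/\rk(C)+1$, and then Steinberg's theorem \cite{Steinberg_classes} that conjugates of an open subset of $T$ cover a dense subset of $G$ finishes with one doubling: $G=C^{144(s+1)}\subseteq C^{288\,\rk(G)/\rk(C)}$ (Proposition \ref{rank_classical}). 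If you wish to keep your central-product construction for the first phase, you would still need this torus-multiplication step, or something equivalent, to bridge the unavoidable rank deficit; its additive accounting is precisely what produces a bound linear in $\rk(G)/\rk(C)$ with the stated constant.
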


The Lang-Weil bound \cite[Theorem 1]{Lang_Weil} suggests that the analogue of \cite[Theorem 1.1]{Liebeck_Shalev} for algebraic groups is $\cn(G,C)\leq c \cdot (\dim(G)/\dim(C))$ where $c$ is a universal constant independent of $G$. We prove this bound in the case of unipotent conjugacy classes.

\begin{thmA}
  \label{thm_B}
  There is a constant $c$ such that for any simple algebraic group $G$ defined over a field of good characteristic and any unipotent conjugacy class $C$ of $G$ we have
  %Let $G$ be a simple algebraic group defined over a field of good characteristic. If $C$ is a unipotent conjuacy class in $G$ then
  $$
  \cn(G,C)\leq c\cdot \frac{\dim(G)}{\dim(C)}.
  $$
  Moreover, we may choose $c= 2^{9}\cdot 3^2$.
\end{thmA}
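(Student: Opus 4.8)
The plan is to derive Theorem~\ref{thm_B} from Theorem~\ref{thm_A} by comparing the two ratios $\rk(G)/\rk(C)$ and $\dim(G)/\dim(C)$. Writing $c_A=2^5\cdot 3^2$ for the constant of Theorem~\ref{thm_A} and $c_B=2^9\cdot 3^2=2^4 c_A$ for the target constant, it suffices to prove the purely geometric inequality
$$
\frac{\dim(C)}{\dim(G)}\;\le\; 2^4\cdot\frac{\rk(C)}{\rk(G)}
$$
for every non-central unipotent class $C$. Indeed, this is equivalent to $\rk(G)/\rk(C)\le 2^4\,\dim(G)/\dim(C)$, so combining it with Theorem~\ref{thm_A} yields $\cn(G,C)\le c_A\,\rk(G)/\rk(C)\le 2^4 c_A\,\dim(G)/\dim(C)=c_B\,\dim(G)/\dim(C)$, as required.

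First I would dispose of the groups of bounded rank. As remarked after Theorem~\ref{thm_distinguished}, for bounded rank --- in particular for all exceptional $G$ --- the bound of Theorem~\ref{thm_B} already follows from \cite{Gordeev_Saxl_1}. Hence I may fix a constant $N_0$ and assume for the rest that $G$ is of classical type $A_\ell$, $B_\ell$, $C_\ell$ or $D_\ell$ with $\ell\ge N_0$; this lets me use clean estimates valid for large $\ell$ and ignore the finitely many small-rank coincidences.

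The core is thus the displayed geometric inequality for classical $G$, which I would establish type by type using the parametrization of unipotent classes by partitions. Here $u\leftrightarrow\lambda$ with $\dim(C)=\dim(G)-\dim(C_G(u))$ computed from $\sum_i(\lambda_i')^2$ (corrected by the appropriate parity term in types $B$, $C$, $D$), while $\crk(C)=\rk(C_G(u))$ is read off from the reductive part $\prod_i \mathrm{GL}_{m_i}$, respectively $\prod_i \mathrm{Sp}_{m_i}\times\prod_i \mathrm{O}_{m_i}$, of the centralizer, $m_i$ being the multiplicity of the part $i$ in $\lambda$. In type $A_\ell$ (here $\ell=n-1$, and $\rk(C)=n-k$ with $k$ the number of parts) the mechanism is transparent: from the convexity bound $\sum_i(\lambda_i')^2\ge(\lambda_1')^2=k^2$ one gets $\dim(C)=n^2-\sum_i(\lambda_i')^2\le (n-k)(n+k)$, whence
$$
\frac{\dim(C)}{\dim(G)}\le \frac{(n-k)(n+k)}{n^2-1}<2\cdot\frac{n-k}{n-1}=2\cdot\frac{\rk(C)}{\rk(G)},
$$
already with constant $2$. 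The same input --- a lower bound on $\dim(C_G(u))$, equivalently on $\sum_i(\lambda_i')^2$, in terms of the number of parts and hence of $\crk(C)$ --- drives the remaining types.

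The main obstacle I anticipate is carrying out this last step uniformly across types $B$, $C$ and $D$: one must track the parity corrections in the orbit-dimension formula, compute $\rk(C)=\rk(G)-\crk(C)$ correctly from the mixed $\mathrm{Sp}/\mathrm{O}$ factors of the centralizer (including the short-root classes, and the two ``very even'' families in type $D$), and check that in each case the resulting constant stays at most $2^4$. Since only $\ell\ge N_0$ must be treated, these reduce to elementary but somewhat delicate inequalities between quadratic expressions in the part-multiplicities; the comfortable gap between the sharp constant, which is close to $2$, and the target $2^4$ leaves ample room to absorb the lower-order parity terms.
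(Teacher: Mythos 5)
Your proposal is correct, and its top-level structure is exactly that of the paper: the paper also deduces Theorem~\ref{thm_B} from Theorem~\ref{thm_A} by proving precisely your comparison inequality $\rk(G)/\rk(C)\le 2^4\cdot\dim(G)/\dim(C)$ (whence the constant $2^9\cdot 3^2=2^4\cdot 2^5\cdot 3^2$), and it likewise disposes of bounded rank via \cite{Gordeev_Saxl_1}. Where you genuinely diverge is in how that inequality is established for classical groups. The paper stays inside its Bala--Carter/marked-diagram framework: writing $C\leftrightarrow(L_I,P_J)$ with $\rk(C)=|I|$ (Proposition~\ref{rank_Levi_center}), it enlarges $I$ to $\hat I$ with $|\hat I|\le 2|I|$ as in Proposition~\ref{rank_classical}, and then exhibits a large reductive subgroup inside $C_G(u)$ --- an $A$-type subsystem subgroup on the unmarked nodes in type $A$, and in types $B$, $C$, $D$ the factor $H$ of a maximal closed subsystem subgroup $H\times[L_{\hat I},L_{\hat I}]$ given by Borel--de Siebenthal \cite[Theorem 13.12]{Malle_Testerman} --- so that $\dim(C)\le\dim(G)-\dim(H)$, which yields the inequality with constant $16$. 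You instead use the partition parametrization and the exact centralizer-dimension formulas; this is more elementary, and your completed type-$A$ case already shows it gives a sharper constant (near $2$ rather than $16$). The cases you left as a sketch do go through: in types $B$, $C$, $D$ one has $\crk(C)=p$, the number of repeated pairs of parts of $\lambda$ (read off from the $\prod\mathrm{Sp}_{m_i}\times\prod\mathrm{O}_{m_i}$ factors, all of whose ranks are $\lfloor m_i/2\rfloor$), the total number of parts satisfies $k\ge 2p$, so $\sum_i(\lambda_i')^2\ge k^2\ge 4p^2$ and $\dim C_G(u)\ge 2p^2-p$ even after the negative parity correction in the orthogonal cases; writing $q=\rk(C)=\rk(G)-p\ge 1$, the required bound $\dim(C)\le 16\,\bigl(\dim(G)/\rk(G)\bigr)\,q$ becomes, e.g.\ in type $C_n$, the inequality $n+4nq-2q^2\le 32nq+16q$, which is trivially true, and similarly in types $B$ and $D$ (the two very even classes in type $D$ cause no trouble, as they share the same dimension and corank). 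So your route is sound and even yields a better constant; what the paper's route buys is uniformity and economy --- it avoids the type-by-type dimension formulas and parity bookkeeping entirely, reusing the marked-diagram machinery (Lemma~\ref{lem_perm_diag}, Propositions~\ref{classical_G} and~\ref{rank_Levi_center}) already needed for Theorem~\ref{thm_A}.
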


The focus of this paper is on unipotent conjugacy classes and the asymptotic bound of their covering numbers. We believe that it should be possible to improve the upper bound on the constant $c$.

The paper is structured as follows: Section \ref{preliminaries} introduces the notation needed throughout the paper, slightly extends the context to normal subsets, slightly extends the notion of covering number and introduces marked diagrams. % and recalls the unipotent factorization of $G$.
The proofs of Theorems \ref{thm_distinguished}, \ref{thm_A} and \ref{thm_B} are given in Section \ref{sec_distinguished}, \ref{sec_A} and \ref{sec_B} respectively.

\section{Preliminaries}
\label{preliminaries}

\subsection{Setup}
In this paper $G$ denotes a simple algebraic group of rank $r=\rk(G)$ defined over an algebraically closed field $F$ of good characteristic $p$. We assume throughout that $p$ is a good prime for $G$. We fix a Borel subgroup $B$ with unipotent radical $U$ and maximal torus $T$. The roots $\Phi$ of $G$ are with respect to $T$, the set of positive roots $\Phi^{+}$ are with respect to $U$ and $\Delta$ denotes the set of simple roots of $\Phi$ in $\Phi^{+}$. We denote by $U^{-}$ the radical of the Borel subgroup opposite to $B$, i.e. $U^{-}=U^{\dot w_0}$ for some representative $\dot w_0\in N_G(T)$ of the longest element (with respect to $\Delta$) of the Weyl group $N_G(T)/T$. For an element $w\in N_G(T)/T$ we write $\dot w$ for a representative in $N_G(T)$.

For each root $\alpha\in\Phi$ let $u_{\alpha}:F\rightarrow U_{\alpha}$ be an isomorphism from the additive group of the ground field $F$ onto the root subgroup $U_{\alpha}$. For each $\alpha\in\Phi$ we denote by $\alpha^{\vee}:F^{\times}\rightarrow T$ the cocharacter corresponding to the root alpha (see \cite[II\S1.3]{Jantzen_Reductive}). Then
\begin{equation}
  \label{cochar_action}
        {}^{\alpha^{\vee}(t)}u_{\beta}(x)
        =\alpha^{\vee}(t)u_{\beta}(x)\alpha^{\vee}(t)^{-1}
        =u_{\beta}(\beta(\alpha^{\vee}(t))x)
        =u_{\beta}(t^{\langle\beta,\alpha\rangle}x)
\end{equation}
%\begin{equation}
%  \label{cochar_action}
%        {}^{\alpha^{\vee}(t)}u_{\beta}(x)
%        =\alpha^{\vee}(t)u_{\beta}(x)\alpha^{\vee}(t)^{-1}
%        =u_{\alpha}(\alpha^{\vee}(t)(\beta)x)
%        =u_{\alpha}(t^{\langle\beta,\alpha\rangle}x)
%\end{equation}
for all $\alpha,\beta\in\Phi$, $t\in F^{\times}$, $x\in F$ (see \cite[Ch.7]{Carter_Simple}).

For a set of roots $I\subseteq\Phi$, let $\Phi_{I}$ be the root subsystem $\langle I\rangle_{\Phi}$ generated by $I$. We denote by $L_I$ the subgroup $\langle T,U_{\alpha}:\alpha\in\Phi_{I}\rangle$ of $G$. If the roots in $I$ are simple then $L_I$ is a standard Levi subgroup. In this case, we denote by $P_I$ the standard parabolic subgroup with Levi factor $L_I$. When we need to specify the ambient group $G$, we write $L_I^{G}$ or $P_I^{G}$. Notice that $L_I^{G}$ and $P_I^{G}$ make sense in the more general case of a reductive algebraic group $G$. Moreover, we denote by $G(I)$ the subsystem subgroup $\langle U_{\alpha}:\alpha\in\Phi_{I}\rangle$. Notice that $L_I=G(I)T$. In the particular case of $I=\{\alpha\}\subseteq\Delta$ we denote by $G_{\alpha}$ the subgroup $G(I)$.

%For two subsets $A$ and $B$ of $G$ we denote by $A^B$ the set $\{b^{-1}ab:\text{ for all }a\in A\text{ and }b\in B\}$.
%In the particular case of $I=\{\alpha\}\subseteq\Delta$ we denote by $P_{\alpha}$ the parabolic subgroup $P_{I}$ and by $G_{\alpha}$ the subgroup $G(I)$.

\subsection{Normal subsets} A product of conjugacy classes of $G$ is invariant under conjugation by $G$. A \emph{normal subset} $N$ of a group $G$ is a non-empty subset of $G$ which is invariant under $G$-conjugation, i.e. $N$ is a non-empty union of conjugacy classes of $G$. The intermediate steps in our proofs are easier to formulate with this notion. Notice that Theorems \ref{thm_distinguished}, \ref{thm_A} and \ref{thm_B} can be formulated for normal subsets consisting of unipotent elements if the conditions of those statements are imposed on the highest dimensional classes in $N$.

\subsection{Covering numbers} When dealing with successive powers of a conjugacy class $C$ of $G$, we use intermediate steps in which we show that for certain $k\geq 0$ the normal subset $C^k$ contains a certain subset of $G$. For this we slightly extend the notion of covering numer as follows. For subsets $S_1$ and $S_2$ of $G$ we define the \emph{$S_1$-covering number of $S_2$} to be the smallest integer $k$ such that $S_1\subseteq S_2^k$ or $\infty$ if no such $k$ exists. We denote this number by $\cn(S_1,S_2)$.

\subsection{Marked diagrams}
\label{subsection:marked_diagrams}
Let $I$ be a subset of $\Delta$. The \emph{marked diagram} $D_I$ corresponding to the set $I$ is the Dynkin diagram of $G$ where we mark the nodes corresponding to the roots in $\Delta-I$. For example, if $G$ is of type $A_4$, the marked diagram corresponding to $I=\{\alpha_2,\alpha_4\}$ is
$$
\begin{tikzpicture}[scale=.9]
  \node[label=$\alpha_{1}$] (N0) at (0,0) {};
  \node[label=$\alpha_{2}$] (N1) at (1,0) {};
  \node[label=$\alpha_{3}$] (N2) at (2,0) {};
  \node[label=$\alpha_{4}$] (N3) at (3,0) {};
  \draw (N0.center) -- (N1.center) -- (N2.center) -- (N3.center);
  \fill (N0) circle (2pt);
  \fill[white] (N1) circle (2pt);
  \draw (N1) circle (2pt);
  \fill (N2) circle (2pt);
  \fill[white] (N3) circle (2pt);
  \draw (N3) circle (2pt);
\end{tikzpicture},
$$
i.e. the marked nodes are the black nodes in the figure. We identify the roots in $\Delta$ with the corresponding nodes in the Dynkin diagram. A \emph{component} $\Delta'\subseteq\Delta$ of the marked diagram $D_I$ is a maximal (by inclusion) connected subdiagram consisting of marked nodes.

Clearly, there is a $1:1$ correspondence between marked diagrams $D_I$ and standard Levi subgroup $L_I$. Thus, there is a $1:1$ correspondence between marked diagrams $D_I$ and standard parabolic subgroups $P_I$. In the context of unipotent conjugacy classes, distinguished unipotent conjugacy classes are in bijection with distinguished parabolic subgroups. The marked diagram corresponding to a distinguished parabolic subgroup is obtained from the labeled Dynkin diagram \cite[\S5.9]{Carter_Finite} by marking those nodes which are labeled by `2'. For example, if $G$ is of type $C_r$, the distinguished unipotent conjugacy classes are in bijection with marked diagrams of the form
$$
\begin{tikzpicture}[scale=.9]
  \node[label=$\alpha_1$] (N0) at (0,0) {};
  \node[label=$\alpha_2$] (N1) at (1,0) {};
  \node (N2) at (2,0) {};
  \node (N3) at (3,0) {};
  \draw (N0.center) -- (N1.center);
  \fill (N0) circle (2pt);
  \fill (N1) circle (2pt);
  \draw[dotted] (N1.center) -- (N2.center);
  \fill (N2) circle (2pt);
  \draw (N2.center) -- (N3.center);
  \draw [thick,decoration={brace,mirror,raise=10pt},decorate] (N0.center) -- (N2.center) node [pos=0.5,anchor=north,yshift=-17pt] {$m$};

  %\node (N3) at (3,0) {};
  \node (N4) at (4,0) {};
  \node (N5) at (5,0) {};
  \node (N6) at (6,0) {};
  \draw (N3.center) -- (N4.center);
  \fill (N3) circle (2pt);
  \draw[dotted] (N4.center) -- (N5.center);
  \draw (N5.center) -- (N6.center);
  \fill[white] (N4) circle (2pt);
  \draw (N4) circle (2pt);
  \fill[white] (N5) circle (2pt);
  \draw (N5) circle (2pt);
  \draw [thick,decoration={brace,mirror,raise=10pt},decorate] (N3.center) -- (N5.center) node [pos=0.5,anchor=north,yshift=-17pt] {$n_1$};

  %\node (N6) at (6,0) {};
  \node (N7) at (7,0) {};
  \node (N8) at (8,0) {};
  \node (N9) at (9,0) {};
  \draw (N6.center) -- (N7.center);
  \draw[dotted] (N7.center) -- (N8.center);
  \draw (N8.center) -- (N9.center);
  \fill (N6) circle (2pt);
  \fill[white] (N7) circle (2pt);
  \draw (N7) circle (2pt);
  \fill[white] (N8) circle (2pt);
  \draw (N8) circle (2pt);
%  \draw [thick,decoration={brace,raise=10pt},decorate] (N4.center) -- (N8.west) node [pos=0.5,anchor=north,yshift=30pt] {$\Delta_{1}$};

  %\node (N9) at (9,0) {};
  \node (N10) at (10,0) {};
  \node (N11) at (11,0) {};
  \node (N12) at (12,0) {};
  \draw (N9.center) -- (N10.center);
  \draw[dotted] (N10.center) -- (N11.center);
  \draw (N11.center) -- (N12.center);
  \fill (N9) circle (2pt);
  \fill[white] (N10) circle (2pt);
  \draw (N10) circle (2pt);
  \fill[white] (N11) circle (2pt);
  \draw (N11) circle (2pt);
  \draw [thick,decoration={brace,mirror,raise=10pt},decorate] (N9.center) -- (N11.center) node [pos=0.5,anchor=north,yshift=-17pt] {$n_{k-1}$};

  %\node (N12) at (12,0) {};
  \node (N13) at (13,0) {};
  \node (N14) at (14,0) {};
  \node[label=$\alpha_r$] (N15) at (15,0) {};
  \draw (N12.center) -- (N13.center);
  \draw[dotted] (N13.center) -- (N14.center);
  \draw[double distance=3.5pt] (N14.center) -- (N15.center);
  \node(N20) at (14.5,0) {};
  \draw (N20.center) -- (N20.north east);
  \draw (N20.center) -- (N20.south east);
  %\draw[double distance=3.5pt] (N14.center) -- (N15.center);
  \fill (N12) circle (2pt);
  \fill[white] (N13) circle (2pt);
  \draw (N13) circle (2pt);
  \fill[white] (N14) circle (2pt);
  \draw (N14) circle (2pt);
  %\fill[white] (N15) circle (2pt);
  %\draw (N15) circle (2pt);
  \fill (N15) circle (2pt);
  \draw [thick,decoration={brace,mirror,raise=10pt},decorate] (N12.center) -- (N14.center) node [pos=0.5,anchor=north,yshift=-17pt] {$n_k$};

%  \node(N16) at (14.5,0) {};
%  \draw (N16.center) -- (N16.north east);
%  \draw (N16.center) -- (N16.south east);

%  \draw [thick,decoration={brace,raise=10pt},decorate] (N10.center) -- (N14.west) node [pos=0.5,anchor=north,yshift=30pt] {$\Delta_{k-1}$};
\end{tikzpicture}
$$
%where the dashed line is the second diagram in Figure \ref{type_nodes},
where $m+n_1+\dots+n_k+1=r$, $n_1=2$ and where $n_{i+1}=n_i$ or $n_i+1$ for each $1\leq i \leq k-1$. %The following diagrams represent the possibilities for the last node in the Dynkin diagram of a classical group. Here these nodes are marked. In the description of distinguished unipotent conjugacy class some of these nodes may be non-marked (see proof of Proposition \ref{classical_G}).

In what follows, marked diagrams will be used both in the description of distinguished parabolic subgroups and in the description of standard Levi subgroups of $G$.

%\par\noindent
A \emph{shift} of a diagram is the transformation under which we obtain a diagram of the same type with one component moved one node to the left or to the right without touching another component. A \emph{permutation} of a diagram is the transformation under which we obtain a diagram of the same type by permuting the components. Under such transformations the corresponding standard Levi subgroups are conjugate (see Lemma \ref{lem_perm_diag}).

%\subsection{Unipotent factorization}

%The following theorem
%%is a key ingredient in what follows. The result
%is due to Vavilov, Smolensky and Sury \cite{VSS2011unitriangular}. A proof can also be obtained with the method used in \cite{UUUU} for simple groups of Lie type. For more background on this result we refer to \cite{UUUU} and the references therein.

%\begin{thm}
%  \label{thm_UUUU}
%  For a simple algebraic group $G$ we have
%    $U\cdot U^{-}\cdot U\cdot U^{-}=G$. %where $U$ is the unipotent radical of a Borel subgroup $B$ and $U^{-}$ is the unipotent radical of the Borel subgroup opposite to $B$.
%\end{thm}

\section{Covering numbers of distinguished unipotent conjugacy classes}
\label{sec_distinguished}

Recall that a unipotent element is distinguished if $C_G(u)^{\circ}$ is unipotent. A parabolic subgroup $P=LQ$ with Levi factor $L$ and unipotent radical $Q$ is  distinguished if $\dim(L)=\dim (Q/[Q,Q])$ \cite[\S2.5-6]{Liebeck_Seitz}. An element $g$ of a parabolic subgroup $P$ is called a Richardson element of $P$ if the $P$-conjugacy class of $g$ intersects the unipotent radical $Q$ in an open set of $Q$. By the Bala-Carter-Pommerening classification of unipotent conjugacy classes \cite{Bala_Carter,Pommerening} (see also \cite[Theorem 5.9.6 and \S5.11]{Carter_Finite}), there is a bijection between distinguished conjugacy classes and conjugacy classes of distinguished parabolic subgroups. Under this bijection, the conjugacy class of the distinguished parabolic subgroup $P$ corresponds to the (unique) $G$-conjugacy class containing a Richardson element of $P$.

\begin{lem}
  \label{p_radical}
  Let $P$ be a parabolic subgroup of $G$ with unipotent radical $Q$ and let $N$ be a normal subset of $G$.
  If $N$ contains a Richardson element of $P$ then $\cn(Q,N)\leq 2$.%$N^2$ contains $Q$.
\end{lem}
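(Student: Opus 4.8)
The plan is to show that a Richardson element $r \in N$ together with its inverse (which, by normality of $N$, is $G$-conjugate to a Richardson element and hence also lies in $N$) can be used to sweep out all of $Q$ via products of two $N$-conjugates. Concretely, I want to prove that $Q \subseteq N^2$.

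First I would recall the defining property: since $r$ is a Richardson element of $P$, its $P$-conjugacy class $r^P$ meets $Q$ in a dense open subset $\Omega$ of $Q$. The key observation is that for any $q \in Q$, I want to write $q$ as a product of two elements each conjugate to elements of $N$. The natural approach is to use the classical fact that for a dense open set $\Omega \subseteq Q$, the product $\Omega \cdot \Omega^{-1}$ (or $\Omega \cdot \Omega$) covers all of $Q$. Indeed, since $Q$ is a connected unipotent group (in particular an irreducible variety) and $\Omega$ is dense open, for any fixed $q \in Q$ the two dense open sets $\Omega$ and $q\Omega^{-1}$ must intersect in $Q$; choosing a point $s$ in the intersection gives $s \in \Omega$ and $q s^{-1} \in \Omega$, whence $q = (qs^{-1}) \cdot s$ with both factors in $\Omega$.

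Next I would translate membership in $\Omega$ into membership in $N^2$. Each factor lies in $\Omega \subseteq r^P$, so it is $P$-conjugate, hence $G$-conjugate, to the Richardson element $r \in N$. Because $N$ is a normal subset of $G$, every $G$-conjugate of $r$ lies in $N$. Therefore both factors $qs^{-1}$ and $s$ lie in $N$, giving $q \in N \cdot N = N^2$. Since $q \in Q$ was arbitrary, we conclude $Q \subseteq N^2$, which is precisely the assertion $\cn(Q,N) \leq 2$.

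The main obstacle to make rigorous is the intersection step: I must justify that $\Omega$ and $q\Omega^{-1}$ genuinely meet. This rests on $Q$ being an irreducible variety (it is connected unipotent, hence isomorphic to affine space), so any two nonempty open subsets intersect; I need only check that $q\Omega^{-1}$ is again dense open, which follows because $x \mapsto qx^{-1}$ is an automorphism of the variety $Q$. A subtle point worth flagging is that $\Omega$ is defined as the intersection of the $P$-class with $Q$, so I should confirm that translating by $q$ and inverting keeps us inside $Q$ and within the relevant open set; both operations preserve $Q$ since $Q$ is a group, so this causes no difficulty. The argument is otherwise a direct application of the density of Richardson orbits combined with the normality of $N$.
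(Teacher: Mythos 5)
Your proof is correct and follows essentially the same route as the paper: the paper observes that the conjugacy class of the Richardson element meets $Q$ in an open subset $V$ and invokes the standard fact that $V^2=Q$ for a (dense) open subset of a connected group, which is exactly the intersection argument you write out in full. One small bookkeeping point: from $s\in\Omega\cap q\Omega^{-1}$ the direct conclusion is $s^{-1}q\in\Omega$, giving $q=s\cdot(s^{-1}q)$, rather than $qs^{-1}\in\Omega$ --- though the latter also holds because $\Omega=r^{P}\cap Q$ is stable under conjugation by $Q$ --- and either reading completes the proof.
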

\begin{proof}
  %Let $Q$ be the unipotent radical of $P$.
  Let $C$ be the conjugacy class in $N$ containing a Richardson element of $P$. Since $C$ contains a Richardson element of $P$ it intersects $Q$ in an open subset $V$. Since $V$ is an open subset of the connected group $Q$ we have $Q=V^2\subseteq C^2\subseteq N^2$.
  \end{proof}

The following lemma is known. We give two possible proofs.

\begin{lem}
  \label{regular_ss}
  If $N$ is a normal subset containing a regular semisimple element then $\cn(G,N)\leq 3$.
\end{lem}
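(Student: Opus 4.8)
The plan is to reduce to a single conjugacy class and then combine a ``slice'' description of regular semisimple classes with a dimension argument. Since $N$ is a union of conjugacy classes, it contains the class $C=s^{G}$ of some regular semisimple element $s$, and it suffices to prove $C^{3}=G$, for then $G=C^{3}\subseteq N^{3}$. After conjugating I may assume $s\in T$, so that regularity reads $\alpha(s)\neq 1$ for every $\alpha\in\Phi$, equivalently $C_{G}(s)^{\circ}=T$ and $\dim C=|\Phi|=\dim G-r$.

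First I would record the basic slice containments. Because $\alpha(s)\neq 1$ for all $\alpha$, the morphism $U\to U$, $u\mapsto u\,({}^{s}u)^{-1}$ (where ${}^{s}u=sus^{-1}$), is an isomorphism of varieties: it is bijective on each root subgroup (cf.\ (\ref{cochar_action}), since $1-\alpha(s)\neq 0$) and triangular with respect to the height filtration of $U$. Writing $usu^{-1}=\big(u\,({}^{s}u)^{-1}\big)s$ this gives $Us\subseteq C$, and symmetrically $sU\subseteq C$, $U^{-}s\subseteq C$, $sU^{-}\subseteq C$. Using $sUs^{-1}=U$ one even gets $UsU=Us\subseteq C$ and $U^{-}sU^{-}=U^{-}s\subseteq C$.

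Next I would show that $C^{2}$ is dense. Let $\Omega=U^{-}TU$ be the big cell and let $\pi\colon C\cap\Omega\to T$ be the torus--part map $u^{-}tu\mapsto t$. For $c\in C\cap\Omega$ with $\pi(c)=t_{c}$ a direct computation with the slices gives
$$
(U^{-}s)\,c\;\subseteq\;U^{-}\,(s\,t_{c})\,U,
$$
so that $C^{2}\supseteq(U^{-}s)(C\cap\Omega)\supseteq U^{-}(sD)U$ with $D=\pi(C\cap\Omega)\subseteq T$. As $\dim(C\cap\Omega)=|\Phi|$ and $\dim T=r$, the constructible set $D$ is dense in $T$, whence $C^{2}$ contains a dense open subset of $G$. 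Equivalently, $C^{2}$ is a normal subset whose complement $Z:=G\setminus C^{2}$ is a proper closed conjugation--invariant subset.

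The final step is the estimate $\dim Z<\dim C$. Granting it, the argument closes cleanly: for any $g\in G$ the translate $C^{-1}g$ is irreducible of dimension $\dim C>\dim Z$, so it is not contained in $Z$; picking $x\in(C^{-1}g)\setminus Z\subseteq C^{2}$ and writing $x=c^{-1}g$ with $c\in C$ yields $g=cx\in C\cdot C^{2}=C^{3}$. I expect $\dim Z<\dim C$ to be the main obstacle. The computation above only controls $C^{2}$ inside the fixed big cell, where the missing part $U^{-}\big(s(T\setminus D)\big)U$ has codimension $\operatorname{codim}_{T}(T\setminus D)$; the delicate points are to show that $D$ is cofinite in $T$ and, using the conjugation--invariance of $C^{2}$ to cover the complement of $\Omega$, that the only genuinely missed elements are the finitely many central ones (which the extra factor absorbs). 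I would establish this either by a finer fibre--dimension analysis of $\pi$, ruling out that a positive--dimensional family of classes avoids $C^{2}$, or by reducing through the Bruhat machinery already in play to the rank--one case. This is precisely where the sharp constant $3$ must be earned, since density alone yields only $C^{2}\cdot C^{2}=G$, i.e.\ $\cn(G,N)\leq 4$.
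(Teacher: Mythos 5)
Your proposal has a genuine gap, and it sits exactly at the point you flag yourself: the passage from $4$ to $3$ is never established, and it is the entire content of the lemma. Your slice containments $sU,\,Us,\,sU^-,\,U^-s\subseteq C$ are correct (they are also the opening step of the paper's proof), and density of $C^2$ — granting it — yields only $C^4=G$. The promotion to $C^3=G$ requires $\dim\bigl(G\setminus C^2\bigr)<\dim C$, and no density statement can supply this: density gives $\operatorname{codim}\bigl(G\setminus C^2\bigr)\geq 1$, while $\dim C=\dim G-\rk(G)$ forces you to need codimension strictly larger than $\rk(G)$, a requirement that grows with the rank. This is not something a ``finer fibre--dimension analysis'' is known to produce; it is a substantive theorem, and the paper closes precisely this hole by citation. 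Proof 2 of the paper invokes \cite[Theorem 2.1]{Gordeev2000} (Gauss decomposition with prescribed semisimple part): every non-central $g\in G$ is conjugate to $vs^2u$ with $v\in U^-$, $u\in U$; since $vs$ and $su$ are conjugate to $s$ by your slice fact, every non-central element lies in $N^2$, so $G\setminus N^2\subseteq Z(G)$ is finite and $N^3=G$. Proof 1 instead uses the unitriangular factorization $G=UU^-UU^-$ of \cite{VSS2011unitriangular} and rewrites $G=s^3UU^-UU^-=(sU)^{U^-}(sU)^{U^-}(sU^-)\subseteq N^3$. Either result slots directly into your framework (the first shows your set $Z$ is finite), but without such an input your argument proves only $\cn(G,N)\leq 4$.

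Two intermediate claims are also incorrect as stated, though they are secondary. First, $(U^-s)(C\cap\Omega)\supseteq U^-(sD)U$ fails: for $c=u_c^-t_cu_c\in C\cap\Omega$ one has $(U^-s)c=U^-(st_c)u_c$, a single left $U^-$-coset, and the union of these over $C\cap\Omega$ need not contain any full double coset $U^-(st)U$. Already in $SL_2$, for generic $t$ the fibre of $\pi$ over $t$ consists of the elements $u^-_x t u_y$ with $xy$ equal to a fixed nonzero constant, so only the cosets $U^-(st)u_y$ with $y\neq 0$ arise, and the element $st$ itself is missed. Second, density of $D=\pi(C\cap\Omega)$ in $T$ does not follow from comparing $\dim(C\cap\Omega)=|\Phi|$ with $\dim T=r$: the fibres of $\pi$ on $\Omega$ are the double cosets $U^-tU$, themselves of dimension $|\Phi|$, so your dimension count is even consistent with $D$ being a single point. (Also, $G\setminus C^2$ is constructible but need not be closed; your translate argument only needs constructibility, so this is harmless.) These defects are repairable in principle, but the gap described above cannot be repaired without importing a result of the strength that the paper cites.
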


\begin{proof}[Proof 1]
  Let $s\in N$ be a regular semisimple element. We may assume $s\in T$. All elements in $sU$ and all elements in $sU^{-}$ are conjugate to $s$ (see \cite[\S2.4]{Humphreys_conjugacy_classes}). Hence $sU,sU^{-}\subseteq N$. Let $(sU)^{U^{-}}$ denote the set of conjugates of elements in $sU$ by elements in $U^{-}$. By Theorem \cite[Theorem 1]{VSS2011unitriangular} we have%\ref{thm_UUUU} we have
  $$
  G=s^3\cdot U\cdot U^{-}\cdot U\cdot U^{-}=(sU)^{U^{-}}\cdot (sU)^{U^{-}}\cdot sU^{-}\subseteq N^3.
  $$
 since $(sU)^{U^{-}}\cdot sU^{-}=\cup_{v\in U^{-}}v^{-1}sUvsU^{-}=\cup_{v\in U^{-}}v^{-1}sUsU^{-}=U^{-}sUsU^{-}$.
\end{proof}

\begin{proof}[Proof 2]
  Let $s\in N$ be a regular semisimple element. We may assume $s\in T$. By \cite[Theorem 2.1]{Gordeev2000} any non-central element of $G$ is conjugate to $vs^2u$ for some $v\in U^{-}$ and some $u\in U$. Since $s$ is regular, $vs$ and $su$ are conjugate to $s$. Hence, $N^2$ contains any non-central element of $G$. Thus $N^3=G$.
  \end{proof}

\begin{lem}
  \label{A2d_roots}
  Let $\Phi$ be of type $A_{2d+1}$ and let $\gamma_k:=\alpha_{d+1}+\sum_{i=1}^{k}\alpha_{d+1+i}+\alpha_{d+1-i}$ for $0\leq k\leq d$.
  The set of roots $R=\{\gamma_k:0\leq k\leq d\}$ has the property that $\alpha+\beta$ is not a root for any $\alpha,\beta\in R\cup -R$.
  \end{lem}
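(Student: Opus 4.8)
The plan is to pass to the standard coordinate realization of the root system of type $A_{2d+1}$, in which the ambient space carries an orthonormal basis $e_1,\dots,e_{2d+2}$, the roots are the vectors $e_i-e_j$ with $1\le i\neq j\le 2d+2$, and the simple roots are $\alpha_i=e_i-e_{i+1}$. The first step is to rewrite each $\gamma_k$ in these coordinates. Since
$$
\gamma_k=\alpha_{d+1}+\sum_{i=1}^{k}(\alpha_{d+1+i}+\alpha_{d+1-i})
$$
is precisely the sum of the consecutive simple roots $\alpha_{d+1-k},\alpha_{d+2-k},\dots,\alpha_{d+1+k}$, the sum telescopes and yields
$$
\gamma_k=e_{d+1-k}-e_{d+2+k},\qquad 0\le k\le d.
$$

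The key structural observation is then immediate: writing $\gamma_k=e_{p_k}-e_{q_k}$ with $p_k=d+1-k$ and $q_k=d+2+k$, the ``positive'' index $p_k$ ranges over $\{1,\dots,d+1\}$ while the ``negative'' index $q_k$ ranges over $\{d+2,\dots,2d+2\}$. Hence, as $k$ runs through $0,\dots,d$, all indices that occur are distinct, and every positive index is strictly smaller than every negative index; in particular the sets $\{p_k\}$ and $\{q_k\}$ are disjoint. I would record this single fact as the thing driving the whole argument.

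Next I would invoke the elementary criterion for sums of roots in type $A$: a sum $(e_a-e_b)+(e_c-e_f)$ of two roots is again a root if and only if exactly one of the cancellations $b=c$ or $a=f$ occurs (producing $e_a-e_f$ or $e_c-e_b$ respectively). Taking $\alpha,\beta\in R\cup -R$, I split into the three sign patterns. If $\alpha=\pm\gamma_k$ and $\beta=\pm\gamma_m$ carry the same sign, a cancellation would force a positive index to equal a negative index, which is impossible by the disjointness above. If they carry opposite signs, a cancellation forces $q_k=q_m$ or $p_k=p_m$, hence $k=m$, in which case $\alpha+\beta=\gamma_k-\gamma_k=0$. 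In every case no single cancellation can occur, so $\alpha+\beta$ is never a root. One may package all three cases at once by the inner-product reformulation: since the system is simply laced, $\alpha+\beta$ is a root exactly when $(\alpha,\beta)=-1$, and a direct computation gives $(\gamma_k,\gamma_m)=2\delta_{km}$, so the only values of $(\alpha,\beta)$ arising for $\alpha,\beta\in R\cup -R$ are $0$ and $\pm 2$, never $-1$.

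There is no real obstacle here; the content is entirely in the first two steps, namely recognizing $\gamma_k$ as a block of consecutive simple roots and hence as $e_{d+1-k}-e_{d+2+k}$, and observing that the $\gamma_k$ form a pairwise orthogonal family of roots whose positive and negative supports are separated by the index $d+1$. Once this is in place the non-additivity is a one-line check in each sign case, and the orthogonality reformulation makes even the case distinction unnecessary.
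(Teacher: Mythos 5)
Your proof is correct, but it runs along a genuinely different track from the paper's. You pass to the standard coordinate model of $A_{2d+1}$, telescope each $\gamma_k$ to $e_{d+1-k}-e_{d+2+k}$, and then reduce everything to index bookkeeping: the ``positive'' indices live in $\{1,\dots,d+1\}$, the ``negative'' ones in $\{d+2,\dots,2d+2\}$, so no single cancellation can ever occur; equivalently, in your cleanest packaging, $(\gamma_k,\gamma_m)=2\delta_{km}$, so the inner product of any two elements of $R\cup -R$ lies in $\{0,\pm 2\}$ and never equals $-1$, which in a simply laced system is the exact condition for a sum of two roots to be a root. The paper instead argues coordinate-free, in terms of simple-root coefficients: for $\alpha,\beta$ both in $R$ (or both in $-R$) the coefficient of $\alpha_{d+1}$ in $\alpha+\beta$ is $\pm 2$, impossible in type $A$ where roots have coefficients $0,\pm 1$; and for the mixed case it writes $\gamma_k-\gamma_m$ (for $k>m$) as $\delta_1+\delta_2$ with $\delta_1,\delta_2$ orthogonal roots supported on the disjoint chains to the right and to the left of $\alpha_{d+1}$, so the difference is not a root. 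The underlying mechanism is the same in both arguments --- each $\gamma_k$ contains $\alpha_{d+1}$ exactly once and the $\gamma_k$ are pairwise orthogonal --- but your version buys a uniform treatment of all sign patterns in one stroke (the inner-product criterion makes the case split evaporate), at the cost of fixing a coordinate realization, whereas the paper's version stays intrinsic to the abstract root system and splits into the two cases accordingly. Both are complete and elementary; yours is arguably the more transparent write-up of this particular lemma.
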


\begin{proof}
  For $\alpha,\beta\in R$ the coefficient of $\alpha_{d+1}$ in a decomposition of $\alpha+\beta$ w.r.t. $\Delta$ is $2$, hence $\alpha+\beta$ is not a root. Similarly for $\alpha,\beta\in -R$. It suffices to notice that $\alpha-\beta$ is not a root for any $\alpha,\beta\in R$. Let $\alpha=\gamma_k$ and $\beta=\gamma_m$ for some $0\leq k,m\leq d$. If $k=m$ then $\alpha-\beta=0$ which is not a root. If $k>m$ then $\alpha-\beta=\delta_1+\delta_2$ with $\delta_1=\sum_{i=m}^k\alpha_{d+1+i}$ and $\delta_2=\sum_{i=m}^k\alpha_{d+1-i}$. Since $\Phi$ is of type $A$ it is easy to see that the two roots are orthogonal, and hence, that their sum is not a root. The case $k<m$ is similar.
  \end{proof}

\begin{lem}
  \label{A2d_rootgroups}
  Let $G$ be of type $A_{2d+1}$, let $I=\Delta-\{\alpha_{d+1}\}$ and let $P_I$ be the corresponding standard parabolic subgroup with unipotent radical $Q_I$. If $N$ is a normal subset of $G$ containing $Q_I$, then $\cn(G,N)\leq 6$. %$N^4$ contains a regular semisimple element.
\end{lem}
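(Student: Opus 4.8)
The plan is to show that the normal subset $N^{2}$ contains a regular semisimple element and then to invoke Lemma \ref{regular_ss}: once $N^{2}$ meets the regular semisimple locus we get $(N^{2})^{3}=G$, that is $N^{6}=G$, which is exactly $\cn(G,N)\le 6$.

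The first step is to bring in the opposite unipotent radical. The automorphism $-w_{0}$ of the root system of type $A_{2d+1}$ is the diagram automorphism, which fixes the central node $\alpha_{d+1}$ and hence stabilises $I=\Delta-\{\alpha_{d+1}\}$. Therefore $\dot w_{0}$ conjugates $P_{I}$ to its opposite parabolic and $Q_{I}$ to the opposite unipotent radical $Q_{I}^{-}$. Since $N$ is normal and $Q_{I}\subseteq N$, it follows that $Q_{I}^{-}\subseteq N$ as well, so that $Q_{I}\cdot Q_{I}^{-}\subseteq N^{2}$. It now suffices to locate a regular semisimple element inside $Q_{I}\cdot Q_{I}^{-}$.

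To build such an element I would use the roots $\gamma_{0},\dots,\gamma_{d}$ of Lemma \ref{A2d_roots}. Each $\gamma_{k}$ has coefficient $1$ on $\alpha_{d+1}$, so $U_{\gamma_{k}}\subseteq Q_{I}$ and $U_{-\gamma_{k}}\subseteq Q_{I}^{-}$. Lemma \ref{A2d_roots} says that no sum of two elements of $R\cup -R$ is a root; as $\Phi$ is simply laced, the Chevalley commutator formula then shows that $U_{\gamma_{m}}$ commutes with $U_{-\gamma_{k}}$ whenever $m\neq k$, and that the $d+1$ rank-one subgroups $\langle U_{\gamma_{k}},U_{-\gamma_{k}}\rangle$ pairwise commute. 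Consequently, for parameters $x_{k},y_{k}\in F$ the elements $q=\prod_{k}u_{\gamma_{k}}(x_{k})\in Q_{I}$ and $q'=\prod_{k}u_{-\gamma_{k}}(y_{k})\in Q_{I}^{-}$ (each a single abelian product) satisfy $qq'=\prod_{k}\bigl(u_{\gamma_{k}}(x_{k})u_{-\gamma_{k}}(y_{k})\bigr)$, a product of commuting rank-one factors lying in $N^{2}$.

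The step that I expect to need the most care is verifying that $qq'$ is regular semisimple in $G$ for generic $x_{k},y_{k}$, and not merely regular in the subgroup $\langle U_{\pm\gamma_{k}}\rangle$. Writing the roots in the standard coordinates one finds $\gamma_{k}=e_{d+1-k}-e_{d+2+k}$, so the index pairs $\{d+1-k,\,d+2+k\}$ for $0\le k\le d$ partition $\{1,\dots,2d+2\}$. Hence in the natural module $qq'$ acts block-diagonally, with the $k$-th block the $2\times 2$ matrix of rows $(1+x_{k}y_{k},\,x_{k})$ and $(y_{k},\,1)$ on the $k$-th coordinate pair; for a generic choice its $2d+2$ eigenvalues are pairwise distinct, so $qq'$ is regular semisimple. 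Equivalently, because the $\gamma_{k}$ are pairwise orthogonal and their coordinate pairs exhaust all $2d+2$ indices, no root of $G$ vanishes on the subtorus they cut out, which is the coordinate-free way to see regularity. With a regular semisimple element now exhibited in $N^{2}$, Lemma \ref{regular_ss} yields $\cn(G,N^{2})\le 3$ and therefore $N^{6}=G$.
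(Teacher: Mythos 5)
Your proposal is correct, and its skeleton coincides with the paper's: the roots $\gamma_0,\dots,\gamma_d$ of Lemma \ref{A2d_roots}, conjugation by $\dot w_0$ to place the opposite root subgroups inside $N$, a regular semisimple element located in $N^2$, and Lemma \ref{regular_ss} applied to the normal subset $N^2$ to conclude $N^6=G$. Where you genuinely diverge is in how the regular semisimple element is produced. The paper argues abstractly: by \cite[Theorem 2.1]{Gordeev2000} every non-central element of each $G(\gamma_i)\cong\mathrm{SL}_2$ is conjugate into $U_{\gamma_i}U_{-\gamma_i}$, so $N^2$ contains an open subset $\tilde T$ of the torus $\prod_i\im(\gamma_i^{\vee})$; then the cocharacter formula \eqref{cochar_action} shows that no root subgroup centralizes $\tilde T$, and \cite[Lemma 6.4.3]{Springer_algebraic_groups} yields a regular semisimple element of $\tilde T$. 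You instead exhibit one explicitly: $qq'=\prod_k u_{\gamma_k}(x_k)u_{-\gamma_k}(y_k)\in Q_I Q_I^-\subseteq N^2$ acts on the natural module by commuting $2\times 2$ blocks supported on the coordinate pairs $\{d+1-k,d+2+k\}$, and for generic parameters (say $x_k=1$ and the $y_k$ pairwise distinct, avoiding $0$ and $-4$) its $2d+2$ eigenvalues are pairwise distinct. Your route is more elementary and self-contained, dispensing with both the Gauss-decomposition theorem and Springer's lemma, at the cost of being a computation in the simply connected group $\mathrm{SL}_{2d+2}$; this is harmless, since regular semisimplicity is an isogeny invariant (centralizer dimension is preserved), and your closing remark that no root vanishes on the torus cut out by the $\gamma_k^{\vee}$ is exactly the intrinsic formulation that covers all isogeny types. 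What the paper's version buys in exchange is a whole open subset of a maximal torus of the subsystem subgroup inside $N^2$, but only the existence of a single regular semisimple element is needed for this lemma, so your argument suffices.
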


\begin{proof}
  Let $R=\{\gamma_0,\dots,\gamma_d\}$ be the set of roots described in Lemma \ref{A2d_roots}. Since $N$ contains $Q_I$ it also contains $\prod_{k=0}^{d}U_{\gamma_k}$. Let $w_0$ be the longest element (with respect to $\Delta$) of the Weyl group $N_G(T)/T$. One checks that $w_0(\gamma_k)=-\gamma_k$ for all $0\leq k\leq d$. Thus $(\prod_{k=0}^{d}U_{\gamma_k})^{\dot w_0}=\prod_{k=0}^{d}U_{-\gamma_k}$. Since $N$ is a normal subset of $G$ it contains the product of commuting root subgroups $\prod_{i=0}^{d}U_{-\gamma_i}$.

  The factors in the product $\prod_{k=0}^{d}U_{\gamma_k}$ commute since $\gamma_i+\gamma_j$ is not a root for all $0\leq i,j\leq d$. Moreover, since $\alpha+\beta$ is not a root for any $\alpha,\beta\in R\cup -R$ by the commutator relations we have $[U_{\gamma_i},U_{-\gamma_j}]=1$ for $0\leq i\neq j\leq d$. By \cite[Theorem 2.1]{Gordeev2000} any non-central element of $G(\gamma_i)$ is conjugate to an element in $U_{\gamma_i}U_{-\gamma_i}$, hence 
%  By Theorem \ref{thm_UUUU} we have $U_{\gamma_i}U_{-\gamma_i}U_{\gamma_i}U_{-\gamma_i}=G(\gamma_i)$, hence
  $$
 % G(R)
 % =
  \prod_{i=1}^{m} U_{\gamma_i}U_{-\gamma_i}%U_{\gamma_i}U_{-\gamma_i}
  =
  \left(\prod_{i=1}^{m} U_{\gamma_i}\right)
  \left(\prod_{i=1}^{m} U_{-\gamma_i}\right)
  %\left(\prod_{i=1}^{m} U_{\gamma_i}\right)
  %\left(\prod_{i=1}^{m} U_{-\gamma_i}\right)
  %\subseteq N^{4}.
  $$
  is an open subset of $G(R)$ contained in $N^2$. In particular, an open subset $\tilde T$ of the torus $\prod_{i=0}^{d}T_{\gamma_i}$ lies in $N^2$ where $T_{\gamma_i}$ is the image of the cocharacter $\gamma_i^{\vee}$. A direct check using \eqref{cochar_action} shows that $\tilde T$ does not commute with any root subgroup, hence $C_G(\tilde T)^{\circ}=T$ \cite[II Theorem 4.1]{Springer_Steinberg}. Thus it contains an element $t$ \cite[Lemma 6.4.3]{Springer_algebraic_groups} with $C_G(t)^{\circ}=T$, i.e. it contains a regular semisimple element and the claim follows from Lemma \ref{regular_ss}.
  \end{proof}

\begin{prop}
  \label{classical_G}
  Let $G$ be a classical simple algebraic group of rank $\rk(G)>11$. If $C$ is a distinguished unipotent conjugacy class of $G$ then there exists a torus $\tilde T$ of dimension $\rk(G)$ such that $\cn(\tilde T,C)\leq 36$.
  %The normal subset $C^{48}$ contains a maximal torus.
\end{prop}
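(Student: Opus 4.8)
The plan is to reduce to the unipotent radical via Lemma~\ref{p_radical}, and then to build up a maximal torus out of a \emph{bounded number of rounds}, each round contributing a subtorus extracted from a strongly orthogonal family of root subgroups, exactly as in the proof of Lemma~\ref{A2d_rootgroups}. First I would fix a distinguished parabolic $P=LQ$ whose Richardson class is $C$. By Lemma~\ref{p_radical} we have $Q\subseteq C^2$, and since $C$ is a conjugacy class the same holds for every conjugate, so $Q^{g}\subseteq C^2$ for all $g\in G$; in particular $Q^{\dot w}\subseteq C^2$ for every $w$ in the Weyl group, which gives us access to many root subgroups inside $C^2$.

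The engine is the following per-round construction. Let $R\subseteq\Phi^{+}$ be a set of pairwise strongly orthogonal roots ($\alpha\pm\beta\notin\Phi$ for all $\alpha,\beta\in R\cup-R$) all lying in a single conjugate $Q^{g}$. The subgroups $U_{\gamma}$ ($\gamma\in R$) then commute, so $\prod_{\gamma\in R}U_{\gamma}\subseteq Q^{g}\subseteq C^2$. Conjugating this set by a representative of $w_R:=\prod_{\gamma\in R}s_{\gamma}$ (the reflections commute and $w_R(\gamma)=-\gamma$ for each $\gamma\in R$) and using that $C^2$ is a normal subset, we obtain $\prod_{\gamma\in R}U_{-\gamma}\subseteq C^2$, hence
$$\Big(\prod_{\gamma\in R}U_{\gamma}\Big)\Big(\prod_{\gamma\in R}U_{-\gamma}\Big)=\prod_{\gamma\in R}U_{\gamma}U_{-\gamma}\subseteq C^4,$$
a dense subset of the commuting product $G(R)=\prod_{\gamma\in R}G(\gamma)$. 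As in Lemma~\ref{A2d_rootgroups}, \cite[Theorem 2.1]{Gordeev2000} shows that every regular semisimple element of $G(R)$ lying on the subtorus $T_R:=\prod_{\gamma\in R}T_{\gamma}$ is $G(R)$-conjugate into $\prod_{\gamma\in R}U_{\gamma}U_{-\gamma}$; since $C^4$ is normal, all such elements lie in $C^4$. They form a dense open subset $V$ of the connected group $T_R$, and $V\cdot V=T_R$, so $T_R\subseteq C^{8}$.

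Finally I would cover a family of roots with \emph{spanning coroots} by at most four such strongly orthogonal sets $R_1,\dots,R_m$, each contained in a single conjugate $Q^{g_j}$. Because every $T_{R_j}$ is a subtorus of the fixed maximal torus $T$, the product of subgroups $T_{R_1}\cdots T_{R_m}$ equals the subtorus $\tilde T$ they generate; if the coroots $\{\gamma^{\vee}:\gamma\in\bigcup_j R_j\}$ span the cocharacter lattice rationally then $\dim\tilde T=\rk(G)$, and $\tilde T\subseteq C^{8m}$. With $m\le 4$ this gives $\tilde T\subseteq C^{32}\subseteq C^{36}$, and careful bookkeeping over the four classical families yields the stated $\cn(\tilde T,C)\le 36$.

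I expect the last step to be the main obstacle: it is the purely combinatorial task of exhibiting, for each classical type and each distinguished parabolic, a spanning family of roots that splits into at most four strongly orthogonal sets, each sitting inside a single (conjugate of the) distinguished radical. In type $A$ the distinguished class is regular, so $Q=U$ and a two-colouring of $\Delta$ (non-adjacent simple roots are strongly orthogonal) is available; the genuine difficulty there is that strongly orthogonal families only reach about half the rank (compare Lemma~\ref{A2d_roots}, where the $\gamma_k$ supply $d+1$ of the $2d+1$ needed coroots), so complementary roots must be provided in further rounds and checked to lie in conjugates of $Q$. In types $B$, $C$, $D$ one reads the available roots off the marked diagram of the distinguished parabolic, using shifts and permutations (which conjugate the corresponding Levi and parabolic subgroups) to bring its components into standard position; here the long roots $2e_i$ in type $C$, or the strongly orthogonal pairs $e_{2k-1}\pm e_{2k}$ together with a short root in types $B$, $D$, already have spanning coroots, and the only issue is guaranteeing that enough of them have positive grade in few rounds. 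The hypothesis $\rk(G)>11$ provides the room needed to carry this out uniformly, with the remaining bounded-rank cases absorbed by \cite{Gordeev_Saxl_1}.
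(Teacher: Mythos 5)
Your preparatory steps are sound: Lemma~\ref{p_radical} together with normality of $C^2$ does give $Q^{g}\subseteq C^{2}$ for every $g\in G$, and your per-round engine is correct --- for a strongly orthogonal family $R$ inside a single conjugate of $Q$, conjugating by a representative of $\prod_{\gamma\in R}s_{\gamma}$, applying \cite[Theorem 2.1]{Gordeev2000} componentwise in the commuting product $G(R)$, and using $V\cdot V=T_R$ for the dense open $V\subseteq T_R$ indeed yields $T_R\subseteq C^{8}$. This is exactly the mechanism of Lemma~\ref{A2d_rootgroups}. The problem is the step you yourself flag as ``the main obstacle'': you never prove that for every distinguished parabolic of every classical type of rank $>11$ there exist at most four strongly orthogonal families, each lying in a single conjugate of $Q$, whose coroots jointly span. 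That claim is not bookkeeping; it is the entire content of the proposition, and it is precisely where the structure of distinguished diagrams bites. In types $B_r$ and $D_r$ the marked nodes thin out towards the right end of the diagram (the blocks satisfy $n_{i+1}\in\{n_i,n_i+1\}$, so gaps between consecutive marked nodes grow, and the terminal nodes are unmarked), hence any root supported on a single gap of unmarked nodes has grade $0$ and is unavailable in $Q$; a family inside a conjugate $Q^{\dot w}$ is just a Weyl image of a strongly orthogonal family inside $\Phi(Q)$, so your requirement forces a matching argument pairing the coordinates of each gap with coordinates across marked nodes, carried out uniformly over all distinguished diagrams. Moreover, four rounds is quantitatively tight: a necessary condition is that $\Phi(Q)$ contain strongly orthogonal families of average size $\ge r/4$, and this must be established, not assumed. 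Your type~$C$ observation (the long roots $2e_i$ all have positive grade because $\alpha_r$ is marked) and the type~$A$ regular case do work, but the cases carrying the difficulty are exactly the ones left open.

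It is also worth noting that the paper avoids your spanning problem by a different mechanism, which is why its combinatorics is lighter than what your plan requires. Rather than demanding strongly orthogonal families with \emph{spanning} coroots, the paper takes one half-rank family $\{\gamma_k\}$ inside an $A_{2t-1}$ subsystem subgroup $G_t$ (where $\alpha_t$ is a marked node with $t\le r/2$ maximal) and exploits that the resulting torus, though of dimension only $t$, has centralizer $T$ in $G_t$, hence contains a \emph{regular semisimple} element of $G_t$; Lemma~\ref{regular_ss} then gives all of $G_t\subseteq C^{12}$, in particular the full $(2t-1)$-dimensional maximal torus $T_t$ of $G_t$ --- an upgrade from half the rank of the subsystem to its full rank that your rounds do not provide. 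After that, only the single combinatorial inequality $2t-1\ge r/2$ must be checked against the distinguished diagrams, and two Weyl conjugates $T_t^{\dot w_1},T_t^{\dot w_2}$ complete the span, giving $12\cdot 3=36$. To rescue your proposal you would either need to import this regular-element upgrade, or supply the full case analysis (existence and placement of the four families for every distinguished diagram of types $B$, $C$, $D$), which is work comparable to the paper's proof and is absent from the proposal.
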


\begin{proof}
  We prove the statement by means of a case-by-case analysis. Let $P_I$ be the distinguished parabolic subgroup of $G$ with unipotent radical $Q_I$ such that $C\cap Q_I$ is open in $Q_I$. The possible sets of roots $I$ can be read off from the possible distinguished diagrams \cite[\S5.9]{Carter_Finite}. More precisely, $I$ consists of the simple roots corresponding to the nodes labeled with `$0$' in the distinguished diagram corresponding to $P_I$.

  First we treat the case of regular unipotent elements. If $C$ is the conjugacy class of regular unipotent elements then $I=\emptyset$ and $Q_I=U$. Hence $C$ contains an open subset of $U$ and $U^{-}$. Thus $C^2$ contains an open subset of $UU^{-}$ and therefore also an open subset of $(UU^{-})^G$. By \cite[Theorem 2.1]{Gordeev2000} we have $(UU^{-})^G=G-Z(G)$, thus $C^2$ contains an open subset of $G$ and so $C^4=G$.

%  Hence $U,U^{-}\subseteq C^2$ and by Theorem \ref{thm_UUUU} we have $C^{8}=G$. For the other distinguished unipotent classes the diagrams are as follows.
  
  If $G$ is of type $C_r$ then the distinguished diagrams are 
$$
\begin{tikzpicture}[scale=.9]
  \node[label=$\alpha_1$] (N0) at (0,0) {};
  \node[label=$\alpha_2$] (N1) at (1,0) {};
  \node (N2) at (2,0) {};
  \node (N3) at (3,0) {};
  \draw (N0.center) -- (N1.center);
  \fill (N0) circle (2pt);
  \fill (N1) circle (2pt);
  \draw[dotted] (N1.center) -- (N2.center);
  \fill (N2) circle (2pt);
  \draw (N2.center) -- (N3.center);
  \draw [thick,decoration={brace,mirror,raise=10pt},decorate] (N0.center) -- (N2.center) node [pos=0.5,anchor=north,yshift=-17pt] {$m$};

  %\node (N3) at (3,0) {};
  \node (N4) at (4,0) {};
  \node (N5) at (5,0) {};
  \node (N6) at (6,0) {};
  \draw (N3.center) -- (N4.center);
  \fill (N3) circle (2pt);
  \draw[dotted] (N4.center) -- (N5.center);
  \draw (N5.center) -- (N6.center);
  \fill[white] (N4) circle (2pt);
  \draw (N4) circle (2pt);
  \fill[white] (N5) circle (2pt);
  \draw (N5) circle (2pt);
  \draw [thick,decoration={brace,mirror,raise=10pt},decorate] (N3.center) -- (N5.center) node [pos=0.5,anchor=north,yshift=-17pt] {$n_1$};

  %\node (N6) at (6,0) {};
  \node (N7) at (7,0) {};
  \node (N8) at (8,0) {};
  \node (N9) at (9,0) {};
  \draw (N6.center) -- (N7.center);
  \draw[dotted] (N7.center) -- (N8.center);
  \draw (N8.center) -- (N9.center);
  \fill (N6) circle (2pt);
  \fill[white] (N7) circle (2pt);
  \draw (N7) circle (2pt);
  \fill[white] (N8) circle (2pt);
  \draw (N8) circle (2pt);
  %\draw [thick,decoration={brace,raise=10pt},decorate] (N4.center) -- (N8.west) node [pos=0.5,anchor=north,yshift=30pt] {$\Delta_{1}$};

  %\node (N9) at (9,0) {};
  \node (N10) at (10,0) {};
  \node (N11) at (11,0) {};
  \node (N12) at (12,0) {};
  \draw (N9.center) -- (N10.center);
  \draw[dotted] (N10.center) -- (N11.center);
  \draw (N11.center) -- (N12.center);
  \fill (N9) circle (2pt);
  \fill[white] (N10) circle (2pt);
  \draw (N10) circle (2pt);
  \fill[white] (N11) circle (2pt);
  \draw (N11) circle (2pt);
  \draw [thick,decoration={brace,mirror,raise=10pt},decorate] (N9.center) -- (N11.center) node [pos=0.5,anchor=north,yshift=-17pt] {$n_{k-1}$};

  %\node (N12) at (12,0) {};
  \node (N13) at (13,0) {};
  \node (N14) at (14,0) {};
  \node[label=$\alpha_r$] (N15) at (15,0) {};
  \draw (N12.center) -- (N13.center);
  \draw[dotted] (N13.center) -- (N14.center);
  \draw[double distance=3.5pt] (N14.center) -- (N15.center);
  \fill (N12) circle (2pt);
  \fill[white] (N13) circle (2pt);
  \draw (N13) circle (2pt);
  \fill[white] (N14) circle (2pt);
  \draw (N14) circle (2pt);
  \fill (N15) circle (2pt);
  \draw [thick,decoration={brace,mirror,raise=10pt},decorate] (N12.center) -- (N14.center) node [pos=0.5,anchor=north,yshift=-17pt] {$n_k$};

  \node(N16) at (14.5,0) {};
  \draw (N16.center) -- (N16.north east);
  \draw (N16.center) -- (N16.south east);

  %\draw [thick,decoration={brace,raise=10pt},decorate] (N10.center) -- (N14.west) node [pos=0.5,anchor=north,yshift=30pt] {$\Delta_{k-1}$};
\end{tikzpicture}
$$
where $m+n_1+\dots+n_k+1=r$, $n_1=2$ and $n_{i+1}=n_i$ or $n_i+1$ for each $1\leq i \leq k-1$.

If $G$ is of type $B_r$ then the distinguished diagrams are 
$$
\begin{tikzpicture}[scale=.9]
  \node[label=$\alpha_1$] (N0) at (0,0) {};
  \node[label=$\alpha_2$] (N1) at (1,0) {};
  \node (N2) at (2,0) {};
  \node (N3) at (3,0) {};
  \draw (N0.center) -- (N1.center);
  \fill (N0) circle (2pt);
  \fill (N1) circle (2pt);
  \draw[dotted] (N1.center) -- (N2.center);
  \fill (N2) circle (2pt);
  \draw (N2.center) -- (N3.center);
  \draw [thick,decoration={brace,mirror,raise=10pt},decorate] (N0.center) -- (N2.center) node [pos=0.5,anchor=north,yshift=-17pt] {$m$};

  %\node (N3) at (3,0) {};
  \node (N4) at (4,0) {};
  \node (N5) at (5,0) {};
  \node (N6) at (6,0) {};
  \draw (N3.center) -- (N4.center);
  \fill (N3) circle (2pt);
  \draw[dotted] (N4.center) -- (N5.center);
  \draw (N5.center) -- (N6.center);
  \fill[white] (N4) circle (2pt);
  \draw (N4) circle (2pt);
  \fill[white] (N5) circle (2pt);
  \draw (N5) circle (2pt);
  \draw [thick,decoration={brace,mirror,raise=10pt},decorate] (N3.center) -- (N5.center) node [pos=0.5,anchor=north,yshift=-17pt] {$n_1$};

  %\node (N6) at (6,0) {};
  \node (N7) at (7,0) {};
  \node (N8) at (8,0) {};
  \node (N9) at (9,0) {};
  \draw (N6.center) -- (N7.center);
  \draw[dotted] (N7.center) -- (N8.center);
  \draw (N8.center) -- (N9.center);
  \fill (N6) circle (2pt);
  \fill[white] (N7) circle (2pt);
  \draw (N7) circle (2pt);
  \fill[white] (N8) circle (2pt);
  \draw (N8) circle (2pt);
  %\draw [thick,decoration={brace,raise=10pt},decorate] (N4.center) -- (N8.west) node [pos=0.5,anchor=north,yshift=30pt] {$\Delta_{1}$};

  %\node (N9) at (9,0) {};
  \node (N10) at (10,0) {};
  \node (N11) at (11,0) {};
  \node (N12) at (12,0) {};
  \draw (N9.center) -- (N10.center);
  \draw[dotted] (N10.center) -- (N11.center);
  \draw (N11.center) -- (N12.center);
  \fill (N9) circle (2pt);
  \fill[white] (N10) circle (2pt);
  \draw (N10) circle (2pt);
  \fill[white] (N11) circle (2pt);
  \draw (N11) circle (2pt);
  \draw [thick,decoration={brace,mirror,raise=10pt},decorate] (N9.center) -- (N11.center) node [pos=0.5,anchor=north,yshift=-17pt] {$n_{k-1}$};

  %\node (N12) at (12,0) {};
  \node (N13) at (13,0) {};
  \node (N14) at (14,0) {};
  \node[label=$\alpha_r$] (N15) at (15,0) {};
  \draw (N12.center) -- (N13.center);
  \draw[dotted] (N13.center) -- (N14.center);
  \draw[double distance=3.5pt] (N14.center) -- (N15.center);
  \fill (N12) circle (2pt);
  \fill[white] (N13) circle (2pt);
  \draw (N13) circle (2pt);
  \fill[white] (N14) circle (2pt);
  \draw (N14) circle (2pt);
  \fill[white] (N15) circle (2pt);
  \draw (N15) circle (2pt);
  \draw [thick,decoration={brace,mirror,raise=10pt},decorate] (N12.center) -- (N14.center) node [pos=0.5,anchor=north,yshift=-17pt] {$n_k$};

  \node(N16) at (14.5,0) {};
  \draw (N16.north) -- (N16.east);
  \draw (N16.south) -- (N16.east);

  %\draw [thick,decoration={brace,raise=10pt},decorate] (N10.center) -- (N14.west) node [pos=0.5,anchor=north,yshift=30pt] {$\Delta_{k-1}$};
\end{tikzpicture}
$$
where $m+n_1+\dots+n_k+1=r$, $n_1=2$, $n_{i+1}=n_i$ or $n_i+1$ for each $1 \leq i\leq k-2$ and $n_k=n_{k-1}/2$ if $n_{k-1}$ is even or $n_k=(n_{k-1}-1)/2$ if $n_{k-1}$ is odd.

If $G$ is of type $D_r$ then the distinguished diagrams are 
$$
\begin{tikzpicture}[scale=.9]
  \node[label=$\alpha_1$] (N0) at (0,0) {};
  \node[label=$\alpha_2$] (N1) at (1,0) {};
  \node (N2) at (2,0) {};
  \node (N3) at (3,0) {};
  \draw (N0.center) -- (N1.center);
  \fill (N0) circle (2pt);
  \fill (N1) circle (2pt);
  \draw[dotted] (N1.center) -- (N2.center);
  \fill (N2) circle (2pt);
  \draw (N2.center) -- (N3.center);
  \draw [thick,decoration={brace,mirror,raise=10pt},decorate] (N0.center) -- (N2.center) node [pos=0.5,anchor=north,yshift=-17pt] {$m$};

  %\node (N3) at (3,0) {};
  \node (N4) at (4,0) {};
  \node (N5) at (5,0) {};
  \node (N6) at (6,0) {};
  \draw (N3.center) -- (N4.center);
  \fill (N3) circle (2pt);
  \draw (N4.center) -- (N5.center);
  \draw (N5.center) -- (N6.center);
  \fill[white] (N4) circle (2pt);
  \draw (N4) circle (2pt);
  \fill (N5) circle (2pt);

  \node (N7) at (7,0) {};
  \node (N8) at (8,0) {};
  \node (N12) at (9,0) {};
  \draw (N6.center) -- (N7.center);
  \draw (N7.center) -- (N8.center);
  \draw[dotted] (N8.center) -- (N12.center);
  \fill[white] (N6) circle (2pt);
  \draw (N6) circle (2pt);
  \fill (N7) circle (2pt);
  \fill[white] (N8) circle (2pt);
  \draw (N8) circle (2pt);
  %\draw [thick,decoration={brace,raise=10pt},decorate] (N4.center) -- (N6.center) node [pos=0.5,anchor=north,yshift=30pt] {$\Delta_{1}$};

  %\node (N9) at (9,0) {};
%  \node (N10) at (10,0) {};
%  \node (N11) at (11,0) {};
%  \node (N12) at (12,0) {};
%  \draw (N9.center) -- (N10.center);
%  \draw[dotted] (N10.center) -- (N11.center);
%  \draw (N11.center) -- (N12.center);
%  \fill (N9) circle (2pt);
%  \fill[white] (N10) circle (2pt);
%  \draw (N10) circle (2pt);
%  \fill[white] (N11) circle (2pt);
%  \draw (N11) circle (2pt);

  %\node (N12) at (9,0) {};
  \node (N13) at (10,0) {};
  \node (N14) at (11,0) {};
  \node[label={[shift={(.6,-.4)}]$\alpha_{r-1}$}] (N15) at (12,.5) {};
  \node[label={[shift={(.4,-.4)}]$\alpha_{r}$}] (N16) at (12,-.5) {};
  \draw (N12.center) -- (N13.center);
  \draw (N13.center) -- (N14.center);
  \draw (N14.center) -- (N15.center);
  \draw (N14.center) -- (N16.center);
  \fill[white] (N12) circle (2pt);
  \draw (N12) circle (2pt);
  \fill (N13) circle (2pt);
  \fill[white] (N14) circle (2pt);
  \draw (N14) circle (2pt);
  %\fill[white] (N15) circle (2pt);
  \fill (N15) circle (2pt);
  %\fill[white] (N16) circle (2pt);
  \fill (N16) circle (2pt);

  \draw [thick,decoration={brace,mirror,raise=10pt},decorate] (N3.center) -- (N14.center) node [pos=0.5,anchor=north,yshift=-17pt] {$2k$};

  %\draw [thick,decoration={brace,raise=10pt},decorate] (N12.center) -- (N14.center) node [pos=0.5,anchor=north,yshift=30pt] {$\Delta_{k-1}$};
  
\end{tikzpicture}
$$
where $m+2k+2=r$, together with
$$
\begin{tikzpicture}[scale=.9]
  \node[label=$\alpha_1$] (N0) at (0,0) {};
  \node[label=$\alpha_2$] (N1) at (1,0) {};
  \node (N2) at (2,0) {};
  \node (N3) at (3,0) {};
  \draw (N0.center) -- (N1.center);
  \fill (N0) circle (2pt);
  \fill (N1) circle (2pt);
  \draw[dotted] (N1.center) -- (N2.center);
  \fill (N2) circle (2pt);
  \draw (N2.center) -- (N3.center);
  \draw [thick,decoration={brace,mirror,raise=10pt},decorate] (N0.center) -- (N2.center) node [pos=0.5,anchor=north,yshift=-17pt] {$m$};

  %\node (N3) at (3,0) {};
  \node (N4) at (4,0) {};
  \node (N5) at (5,0) {};
  \node (N6) at (6,0) {};
  \draw (N3.center) -- (N4.center);
  \fill (N3) circle (2pt);
  \draw[dotted] (N4.center) -- (N5.center);
  \draw (N5.center) -- (N6.center);
  \fill[white] (N4) circle (2pt);
  \draw (N4) circle (2pt);
  \fill[white] (N5) circle (2pt);
  \draw (N5) circle (2pt);
  \draw [thick,decoration={brace,mirror,raise=10pt},decorate] (N3.center) -- (N5.center) node [pos=0.5,anchor=north,yshift=-17pt] {$n_1$};

  %\node (N6) at (6,0) {};
  \node (N7) at (7,0) {};
  \node (N8) at (8,0) {};
  \node (N9) at (9,0) {};
  \draw (N6.center) -- (N7.center);
  \draw[dotted] (N7.center) -- (N8.center);
  \draw (N8.center) -- (N9.center);
  \fill (N6) circle (2pt);
  \fill[white] (N7) circle (2pt);
  \draw (N7) circle (2pt);
  \fill[white] (N8) circle (2pt);
  \draw (N8) circle (2pt);
%  \draw [thick,decoration={brace,raise=10pt},decorate] (N4.center) -- (N8.west) node [pos=0.5,anchor=north,yshift=30pt] {$\Delta_{1}$};

  %\node (N9) at (9,0) {};
  \node (N10) at (10,0) {};
  \node (N11) at (11,0) {};
  \node (N12) at (12,0) {};
  \draw (N9.center) -- (N10.center);
  \draw[dotted] (N10.center) -- (N11.center);
  \draw (N11.center) -- (N12.center);
  \fill (N9) circle (2pt);
  \fill[white] (N10) circle (2pt);
  \draw (N10) circle (2pt);
  \fill[white] (N11) circle (2pt);
  \draw (N11) circle (2pt);
  \draw [thick,decoration={brace,mirror,raise=10pt},decorate] (N9.center) -- (N11.center) node [pos=0.5,anchor=north,yshift=-17pt] {$n_{k-1}$};

  %\node (N12) at (12,0) {};
  \node (N13) at (13,0) {};
  \node (N14) at (14,0) {};
  \node[label={[shift={(.6,-.4)}]$\alpha_{r-1}$}] (N15) at (15,.5) {};
  \node[label={[shift={(.4,-.4)}]$\alpha_{r}$}] (N16) at (15,-.5) {};
  \draw (N12.center) -- (N13.center);
  \draw[dotted] (N13.center) -- (N14.center);
  \draw (N14.center) -- (N15.center);
  \draw (N14.center) -- (N16.center);
  \fill (N12) circle (2pt);
  \fill[white] (N13) circle (2pt);
  \draw (N13) circle (2pt);
  \fill[white] (N14) circle (2pt);
  \draw (N14) circle (2pt);
  \fill[white] (N15) circle (2pt);
  \draw (N15) circle (2pt);
  \fill[white] (N16) circle (2pt);
  \draw (N16) circle (2pt);

    \node (N12p) at (12,-0.5) {};
  \draw [thick,decoration={brace,mirror,raise=10pt},decorate] (N12p.center) -- (N16.center) node [pos=0.5,anchor=north,yshift=-17pt] {$n_k$};

  \node (N10p) at (10,0.5) {};
%  \draw [thick,decoration={brace,raise=10pt},decorate] (N10p.center) -- (N15.center) node [pos=0.5,anchor=north,yshift=30pt] {$\Delta_{k-1}$};

\end{tikzpicture}
$$
where $m+n_1+\dots+n_k=r$, $n_1=2$, $n_{i+1}=n_i$ or $n_i+1$ for each $1 \leq i\leq k-2$ and $n_k=n_{k-1}/2$ if $n_{k-1}$ is even or $n_k=(n_{k-1}+1)/2$ if $n_{k-1}$ is odd.

Choose $t$ maximal such that $t\leq r/2$ and such that $\alpha_{t}$ is a marked node. Let $\Phi_t$ be the root subsystem generated by $I_t=\{\alpha_1,\dots,\alpha_{2t-1}\}$ and let $\Phi_t^{w}$ be the set of non-marked nodes among $\alpha_1,\dots,\alpha_{2t-1}$. Consider the subsystem subgroup $G_t=G(\Phi_t)$. It is a group of type $A_{2t-1}$. By Lemma \ref{p_radical}, $C^2$ contains $Q_I$. In particular it contains $Q_t=Q_I\cap G_t$. The subgroup $Q_t$ is the unipotent radical of the standard parabolic subgroup of $G_t$ with standard Levi factor $L_t$ generated by the maximal torus $T_t=G_t\cap T$ and $G(\Phi_t^{w})$. That is, $Q_t$ is the product (in a fixed but arbitrary order) of the root subgroups $U_{\beta}$ with $\beta\in \Phi^{+}\cap (\Phi_t-\Phi_t^{w})$. Since $\alpha_{t}$ is marked, $L_t$ is a subgroup of the standard parabolic subgroup $P_{I_t-\{\alpha_t\}}$ of $G_t$. Thus, the unipotent radical of this parabolic subgroup is contained in $Q_t\subseteq C^2$. We may therefore apply Lemma \ref{A2d_rootgroups} with $\alpha_{d+1}=\alpha_t$ for the normal subset $C^2\cap G_t$ of the group $G_t$, to obtain that $G_t\subseteq C^{2\cdot 6}$. In particular $C^{12}$ contains the maximal torus $T_t$ of $G_t$.

We claim that $2t-1\geq r/2$. If $t\leq m$ then $\alpha_{t+1}$ is marked and the claim follows. Assume that $t>m$. Let $l$ be such that $m+n_1+\dots+n_l+1=t$ and let $\alpha_{t'}$ be the next marked node to the right of $\alpha_t$. The node $\alpha_{t'}$ exists otherwise $G$ is of type $B_r$ or $D_r$ and $l=k-1$ in which case $n_k\leq \frac{n_{k-1}}{2}<n_{k-1}$. Then $t> r/2$, which is a contradiction with the choice of $t$. If $t'=r$ then $G$ is of type $C_r$ and since $t\leq r/2$, it follows that $k\leq 2$ - a case which is excluded since $r>11$. If $t'=r-1$ then $G$ is of type $D_r$ and since $t\leq r/2$ the rank $r$ would again need to be smaller than $11$. In all other cases $t'=m+n_1+\dots+n_{l+1}+1$. Then, since $n_{l+1}=n_{l}$ or $n_{l+1}=n_{l}+1$, we have
$$
2t-1=2m+2n_1+\dots+2n_l+1\geq m+n_1+\dots+n_l+1=t'
$$
unless $m=0$, $l=1$ and $n_2=n_1+1$ in which case $t=3$ and $t'=6$. In this case, since $t'>r/2$ the rank would again have to be less than $11$.

Thus $\dim T_t\geq r/2$ and there is a Weyl group element $w_1$ such that $T'=T_tT_t^{\dot w_1}\subseteq C^{24}$ is a torus of $G(\alpha_1,\dots,\alpha_{r-1})$ of dimension $r-1$. Indeed, choose $w_1$ to be the longest element of the Weyl group of $G(\alpha_1,\dots,\alpha_{r-1})$.

In all cases, inspecting the root systems one finds that $T'T_t^{\dot w_2}$ contains a torus of dimension $r$, for some element $w_2$ of the Weyl group. There are several such choices and $w_2=w_1s_{\alpha_r}$ works in all cases since $T_t^{\dot w_1}$ projects onto a $1$-dimensional torus of $G(\alpha_{r-1})\cap T$. Thus $C^{36}$ contains an $r$-dimensional torus of $G$.
\end{proof}

\begin{proof}[Proof of Theorem \ref{thm_distinguished}]
  Let $C$ be a distinguished unipotent conjugacy class of $G$. For the bounded rank case we use \cite{Gordeev_Saxl_1}: if $\rk(G)\leq 11$ then, for any conjugacy class $C$ of $G$ we have $\cn(C)\leq 4\cdot\rk(G)\leq 44$. For $\rk(G)>11$, by Proposition \ref{classical_G} there is an $\rk(G)$-dimensional torus in $C^{36}$. Hence $C^{36}$ contains an open subset of $T$. It therefore contains an open subset of $G$ \cite[\S3.5 Corollary]{Steinberg_classes}, hence $G=C^{36\cdot 2}$.
  \end{proof}

\section{Covering numbers of unipotent conjugacy classes in terms of rank}
\label{sec_A}
Let $C$ be the conjugacy class of the unipotent element $u\in G$. By the Bala-Carter-Pommerening classification, up to $G$-conjugacy, there is a unique pair $(L,P)$ consisting of a Levi-subgroup $L$ and a distinguished parabolic subgroup $P$ of $[L,L]$ such that $u$ is a Richardson element of $P$. Conjugating if necessary we may assume that $L=L_I$ and that $P=P_J^{[L_I,L_I]}$ for some $J\subseteq I\subseteq\Delta$. Since it is clear from the context that we consider parabolic subgroups of $[L_I,L_I]$, for brevity we write $P_J$ instead of $P_J^{[L_I,L_I]}$.

\begin{prop}
  \label{rank_Levi_center}
  Let $C$ be the unipotent conjugacy class corresponding to the pair $(L_I,P_J)$.
  We have
  \begin{enumerate}
  \item $\crk(C)=\dim(Z(L_I)),$
  \item $\crk(C)=|\Delta-I|,$
  \item $\rk(C)=|I|.$
    \end{enumerate}
\end{prop}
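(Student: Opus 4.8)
The plan is to prove (1) first, as the substantive statement; then (2) is a standard torus-dimension count and (3) is immediate from (2). Throughout write $u$ for a representative of $C$ and $L=L_I$, and recall from the Bala--Carter--Pommerening setup fixed above that $u\in L$ is a Richardson element of the distinguished parabolic $P_J$ of $[L,L]$, so $u$ is distinguished in $[L,L]$, i.e. $C_{[L,L]}(u)^{\circ}$ is unipotent. Since $L=[L,L]\cdot Z(L)^{\circ}$, this means $C_L(u)^{\circ}=C_{[L,L]}(u)^{\circ}\cdot Z(L)^{\circ}$ has $Z(L)^{\circ}$ as a maximal torus. This is the one external fact I will lean on.

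For (1) the goal is to show that $Z(L)^{\circ}$ is in fact a maximal torus of $C_G(u)^{\circ}$, not merely of $C_L(u)^{\circ}$. Since $Z(L)$ centralizes $u\in L$, the torus $Z(L)^{\circ}$ lies in $C_G(u)^{\circ}$; let $S$ be a maximal torus of $C_G(u)^{\circ}$ with $Z(L)^{\circ}\subseteq S$. The key input is the defining property of a Levi subgroup, $L=C_G(Z(L)^{\circ})$. From $Z(L)^{\circ}\subseteq S$ I get $C_G(S)\subseteq C_G(Z(L)^{\circ})=L$, and since $S$ is central in its own centralizer, $S\subseteq Z(C_G(S))\subseteq L$. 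Thus $S$ is a torus of $C_G(u)\cap L=C_L(u)$ containing $Z(L)^{\circ}$; but $Z(L)^{\circ}$ is already maximal among tori of $C_L(u)$ by the previous paragraph, forcing $S=Z(L)^{\circ}$. Hence $\crk(C)=\rk(C_G(u))=\dim Z(L)^{\circ}=\dim Z(L_I)$.

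For (2) I would compute $\dim Z(L_I)$ directly from the root data. Because $L_I$ is a Levi subgroup with maximal torus $T$, its center lies in $T$ and equals the common kernel of its roots, $Z(L_I)=\bigcap_{\alpha\in\Phi_I}\ker\alpha=\bigcap_{\alpha\in I}\ker\alpha$, the last equality holding because $\Phi_I=\langle I\rangle_{\Phi}$ is spanned by $I$. As the simple roots in $I$ are linearly independent characters of $T$, they impose $|I|$ independent conditions, so $\dim Z(L_I)=\dim T-|I|=r-|I|=|\Delta|-|I|=|\Delta-I|$. Combined with (1) this yields $\crk(C)=|\Delta-I|$. Statement (3) is then immediate: $\rk(C)=\rk(G)-\crk(C)=|\Delta|-|\Delta-I|=|I|$.

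The only real obstacle is (1), and within it the identification of a maximal torus of $C_G(u)$ with $Z(L_I)^{\circ}$; everything rests on the interplay between the Levi identity $L=C_G(Z(L)^{\circ})$ and the distinguishedness of $u$ in $[L,L]$, both of which are supplied once we invoke the Bala--Carter--Pommerening description already in force. Parts (2) and (3) are routine.
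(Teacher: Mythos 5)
Your proof is correct, but it runs in the opposite direction from the paper's. The paper starts from an arbitrary maximal torus $S$ of $C_G(u)$, forms the Levi subgroup $C_G(S)$ (citing that centralizers of tori are Levi subgroups), shows by a torus-enlargement argument that $u$ is distinguished in the derived group of that Levi (a positive-dimensional maximal torus $\tilde S$ of $C_{[L_{I'},L_{I'}]}(u)$ would make $S\tilde S$ a larger torus in $C_G(u)$), and then invokes the \emph{uniqueness} part of the Bala--Carter--Pommerening classification to conclude that the resulting pair $(L_{I'},P_{J'})$ is $G$-conjugate to $(L_I,P_J)$, whence $\dim S=\dim Z(L_{I'})=\dim Z(L_I)$. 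You instead work with the given Levi $L_I$ throughout: you take as input the other direction of the classification (Richardson elements of distinguished parabolic subgroups of $[L_I,L_I]$ are distinguished there) together with the Levi identity $L_I=C_G(Z(L_I)^{\circ})$, and show directly that $Z(L_I)^{\circ}$ is a maximal torus of $C_G(u)$; the crux is the implication $Z(L_I)^{\circ}\subseteq S\Rightarrow S\subseteq C_G(S)\subseteq C_G(Z(L_I)^{\circ})=L_I$, which traps $S$ inside $C_{L_I}(u)$, where $Z(L_I)^{\circ}$ is already maximal because $C_{[L_I,L_I]}(u)^{\circ}$ is unipotent. Your route buys independence from the uniqueness statement of the classification, pins down a maximal torus of $C_G(u)$ explicitly as $Z(L_I)^{\circ}$, and makes explicit the root-theoretic count $\dim Z(L_I)=|\Delta-I|$, which the paper asserts without computation. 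The paper's route, conversely, never needs to assume that Richardson classes of distinguished parabolics are distinguished --- it re-derives the distinguishedness it needs from the maximality of $S$ --- at the cost of appealing to uniqueness of the pair. Both sets of inputs are contained in the Bala--Carter--Pommerening theory as the paper cites it, so either argument is legitimate.
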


\begin{proof}
  Let $u\in U$ be a Richardson element of $P_J$ and let $S$ be a maximal torus of $C_G(u)$. The subgroup $L=C_G(S)$ is a Levi subgroup \cite[Proposition 12.10]{Malle_Testerman} which contains $u$. Conjugating if necessary, we may assume that $L$ is the standard Levi subgroup $L_{I'}$ and that $u\in [L_{I'},L_{I'}]$. Let $\tilde S$ be a maximal torus in $C_{[L_{I'},L_{I'}]}(u)$. If $\dim(\tilde S)>0$ then $S\tilde S$ is a torus of $C_G(u)$ of dimension higher than $S$ - a contradiction with the choice of $S$. Hence $\tilde S=\{1\}$ and therefore $C_{[L_{I'},L_{I'}]}(u)^{\circ}$ is unipotent. Thus, $u$ is a distinguished unipotent element of $[L_{I'},L_{I'}]$, i.e. we may assume that $u$ is a Richardson element of some distinguished parabolic subgroup $P_{J'}$ of $[L_{I'},L_{I'}]$. By the Balla-Carter-Pommerening classification the pairs $(L_{I},P_{J})$ and $(L_{I'},P_{J'})$ are $G$-conjugate. Hence $\crk(C)=\dim(Z(L_I))=\dim(S)=\dim(Z(L_{I'}))$ equals $|\Delta-I|$. The last claim also follows since $\rk(C)=\rk(G)-\crk(C)=|\Delta|-|\Delta-I|=|I|$.
%  Moreover, since $\rk(G)=\rk(L_I)=\dim(Z(L_I))+\rk([L_I,L_I])$ and $\rk([L_I,L_I])=|I|$, the rank $\rk(G)=\rk(G)-$.
\end{proof}
In what follows, marked diagrams will be used to describe standard Levi subgroups. As mentioned in \S\ref{subsection:marked_diagrams}, the components of a marked diagram can be shifted and permuted. Consider a component $D'$ of a marked diagram $D_{\Delta-I}$, i.e. $D'$ is a connected subset of marked nodes corresponding to the roots in $I$. A shift of $D'$ to the left by one position corresponds to removing the marking of the right most node $\gamma'$ of $D'$ and marking $\gamma''$, the first node left of $D'$ (if such a node exists). Let $J=\{\gamma''\}\cup I-\{\gamma'\}$. We say that $D_{\Delta-J}$ is obtained from $D_{\Delta-I}$ by shifting $D'$ to the left with one position if the Levi subgroups corresponding to the two marked diagrams are isomorphic. Similarly one has right shifts. A shift is a left or right shift by any number of positions.

Let $D_1$ and $D_2$ be two components of a marked diagram $D_{\Delta-I}$ which are separated by one non-marked node $\gamma$. We may assume that $D_1$ is left of $\gamma$ and that $D_1$ has more nodes than $D_2$. Let $K_1$ be the set of nodes in $D_1$, let $K_2$ be the set of nodes in $D_2$ and let $K=K_1\cup\{\gamma\}\cup K_2$. Choose $\gamma'\in K$ such that the number of nodes in $K$ left to $\gamma'$ equals $|K_2|$. %Interchanging $D_1$ and $D_2$ corresponds to marking $\gamma$ and removing the marking from  $\gamma'$.
We say that $D_{\Delta-J}$ is obtained from $D_{\Delta-I}$ by permuting the components $D_1$ and $D_2$ if $J=\{\gamma\}\cup I-\{\gamma'\}$ and the Levi subgroups corresponding to the two marked diagrams are isomorphic.

\begin{lem}
  \label{lem_perm_diag}
  Let $L_I$ be a standard Levi subgroup with associated marked diagram $D_I$. If $D_J$ is a marked diagram obtained from $D_I$ by shifting or permuting the components of $D_I$
  %with the following restrictions
  %\begin{enumerate}
  %\item if $G$ is of type $B_r$ or $C_r$ the component containing $\alpha_r$ is fixed,
  %\item if $G$ is of type $D_r$ the component containing $\alpha_r$ and $\alpha_{r-1}$ is fixed;
  %\end{enumerate}
  then $L_I$ is conjugate to $L_J$.
\end{lem}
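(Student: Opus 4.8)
The plan is to reduce the statement to a combinatorial fact about the root system $\Phi$ and then verify that fact by exhibiting an explicit element of the Weyl group $W=N_G(T)/T$. Since a sequence of shifts and permutations is a composition of single such moves, and a composition of conjugations is a conjugation, it suffices to treat one shift and one permutation, and in each case to produce $w\in W$ with $w(\Phi_I)=\Phi_J$. Granting this, a representative $\dot w$ normalizes $T$ and satisfies $\dot w U_\alpha\dot w^{-1}=U_{w(\alpha)}$, so by the description $L_I=\langle T,U_\alpha:\alpha\in\Phi_I\rangle$ of the setup one gets
$$
{}^{\dot w}L_I=\langle T,U_{w(\alpha)}:\alpha\in\Phi_I\rangle=\langle T,U_\beta:\beta\in w(\Phi_I)\rangle=\langle T,U_\beta:\beta\in\Phi_J\rangle=L_J,
$$
so $L_I$ and $L_J$ are conjugate. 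I would also record at the outset that distinct components of a marked diagram are separated by unmarked nodes and hence pairwise non-adjacent, so $\Phi_I$ is the orthogonal disjoint union of the subsystems $\Phi_D$ over the components $D$ of the diagram.

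For a single shift, let $D'$ be the moving component and let $\mathcal{C}\subseteq\Delta$ be the smallest interval of nodes containing both the initial and the final positions of $D'$. Since a shift produces a diagram of the same type, $\mathcal{C}$ spans a subsystem of type $A_N$, so $W(\Phi_{\mathcal{C}})\cong S_{N+1}$. Every marked component other than $D'$ lies at Dynkin distance at least $2$ from $\mathcal{C}$: the shift does not touch another component and components are separated by unmarked nodes, and in a tree two node-sets separated by at least one node on the path between them have no adjacent pair. Hence every such component is orthogonal to $\Phi_{\mathcal{C}}$ and is fixed pointwise by every element of $W(\Phi_{\mathcal{C}})$. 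Identifying $\Phi_{\mathcal{C}}$ with $\{e_i-e_j\}$, a one-position shift of $D'$ is realized by the cyclic permutation $e_i\mapsto e_{i+1}$ (indices within $\mathcal{C}$), and a multi-position shift by a power of it; this $w$ carries $\Phi_{D'}$ to $\Phi_{\widetilde{D'}}$ while fixing every other component, so $w(\Phi_I)=\Phi_J$.

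For a single permutation, write $K=K_1\cup\{\gamma\}\cup K_2$ and $N=|K|$. As the permutation preserves the type, $K$ spans a subsystem of type $A_N$, and exactly as above all other marked components are orthogonal to $\Phi_K$ and fixed by $W(\Phi_K)\cong S_{N+1}$. Identifying $\Phi_K$ with roots on $e_1,\dots,e_{N+1}$ so that $\Phi_{K_1}$ occupies the first $|K_1|+1$ coordinates and $\Phi_{K_2}$ the last $|K_2|+1$, the cyclic shift $\sigma\colon c\mapsto c+(|K_2|+1)\pmod{N+1}$ sends the coordinate block of $\Phi_{K_1}$ to the last $|K_1|+1$ coordinates and that of $\Phi_{K_2}$ to the first $|K_2|+1$ coordinates. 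This interchanges the two blocks precisely as dictated by the choice of $\gamma'$ and the marking of $\gamma$, so $w=\sigma$ again gives $w(\Phi_I)=\Phi_J$.

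The only point needing genuine care, and the main obstacle, is the claim that the Weyl element can be chosen to fix all the remaining components while moving the designated one; this is exactly the orthogonality-by-distance argument, and it rests on two features supplied by the definitions in \S\ref{subsection:marked_diagrams}: that the relevant stretch $\mathcal{C}$ (respectively $K$) is of type $A$, which is forced by the requirements ``same type'' and ``isomorphic Levi subgroups'', and that the moving component stays at Dynkin distance at least $2$ from every other component, which is forced by ``without touching another component'' together with the maximality of components. The boundary configurations (a component abutting a multiple edge, the branch node in type $D$, or an end of the diagram) must be inspected to confirm the type-$A$-ness of $\mathcal{C}$ and $K$, but in each the separating unmarked node keeps the other components at distance at least $2$, so the explicit $S_{N+1}$-computation applies uniformly.
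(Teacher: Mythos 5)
Your core argument is the same as the paper's: realize each single shift or permutation by an explicit element of the Weyl group of the type-$A$ subsystem spanned by the nodes involved, observe that the remaining components are orthogonal to that subsystem and hence fixed pointwise, and conjugate root subgroups accordingly. The only cosmetic difference is that the paper uses longest elements of these subsystems (one flip does a shift, one flip of $\Delta'\cup\{\gamma\}\cup\Delta''$ does a permutation) where you use coordinate cycles in $S_{N+1}$; either choice works.

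The gap is in your final paragraph, where the boundary configurations are waved through. The claim that every other component lies at distance at least $2$ from $\mathcal{C}$, so that ``the explicit $S_{N+1}$-computation applies uniformly,'' fails in type $D_r$. The paper's definition of a shift only tests isomorphism of the two Levi subgroups, and this permits moving a one-node component from the tip $\alpha_{r-1}$ to the tip $\alpha_r$ (the two Levi subgroups are interchanged by the graph automorphism of $D_r$, hence isomorphic). Take $I=\{\alpha_{r-3},\alpha_{r-1}\}$ and $J=\{\alpha_{r-3},\alpha_r\}$ with $r\geq 5$. Then $\mathcal{C}=\{\alpha_{r-1},\alpha_{r-2},\alpha_r\}$, and the untouched component $\{\alpha_{r-3}\}$ is adjacent to the branch node $\alpha_{r-2}\in\mathcal{C}$, i.e.\ at distance $1$. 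Your tree argument breaks precisely here: the path from the initial to the final position passes through a node that the moving component never occupies, which cannot happen along a path but does happen at the fork. The failure is not cosmetic: writing $\alpha_{r-1}=e_{r-1}-e_r$, $\alpha_r=e_{r-1}+e_r$, $\alpha_{r-3}=e_{r-3}-e_{r-2}$, any $w\in W(\Phi_{\mathcal{C}})$ fixes $e_{r-3}$, so $w(e_{r-3}-e_{r-2})=\pm(e_{r-3}-e_{r-2})$ forces $w(e_{r-2})=e_{r-2}$, whence $w$ has an even number of sign changes on $\{e_{r-1},e_r\}$ and sends $e_{r-1}-e_r$ to $\pm(e_{r-1}-e_r)$; thus \emph{no} element of $W(\Phi_{\mathcal{C}})$ carries $\Phi_I$ to $\Phi_J$. (The lemma is still true here, e.g.\ via the signed permutation $e_r\mapsto -e_r$, $e_1\mapsto -e_1$, but that element lies outside $W(\Phi_{\mathcal{C}})$, so your construction does not produce it.) A similar unverified point occurs in types $B_r$, $C_r$: ruling out shifts across the multiple edge requires actually checking that long-root and short-root Levi subgroups are non-isomorphic, which you assert is ``forced'' but never do. The paper sidesteps all of this by treating the non-simply-laced end differently: the component containing $\alpha_r$ (resp.\ $\alpha_{r-1}$ and $\alpha_r$ in type $D$) is kept fixed, and all remaining components are shifted and permuted only inside the subdiagram $\{\alpha_1,\dots,\alpha_{k_0-2}\}$, which genuinely is of type $A$; your proof needs either this reduction or a separate argument for moves that cross the fork or the multiple edge.
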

\begin{proof}
  Let $\Delta'=\{\alpha_{i+1},\alpha_{i+2},\dots,\alpha_{i+l}\}$ be a component of $D_I$, i.e. $\Delta'$ is the set of roots in $\Delta$ corresponding to a maximal (by inclusion) connected subdiagram of marked nodes in $D_I$:
  $$
\begin{tikzpicture}[scale=.8]
  \node (N0) at (0,0) {};
  \node (N1) at (1,0) {};
  \node[label=$\alpha_{i}$] (N2) at (2,0) {};
  \node[label=$\alpha_{i+1}$] (N3) at (3,0) {};
  \node (N4) at (4,0) {};
  \node (N5) at (5,0) {};
  \node[label=$\alpha_{i+l}$] (N6) at (6,0) {};
  \node (N7) at (7,0) {};
  \node (N8) at (8,0) {};
  \node (N9) at (9,0) {};

  \draw[dotted] (N0.center) -- (N1.center);

  \draw (N1.center) -- (N2.center);
  \fill[white] (N1) circle (2pt);
  \draw (N1) circle (2pt);
  
  \draw (N2.center) -- (N3.center);
  \fill[white] (N2) circle (2pt);
  \draw (N2) circle (2pt);
  
  \draw (N3.center) -- (N4.center);
  \fill (N3) circle (2pt);

  \draw[dotted] (N4.center) -- (N5.center);
  \fill (N4) circle (2pt);

  \draw (N5.center) -- (N6.center);
  \fill (N5) circle (2pt);

  \draw (N6.center) -- (N7.center);
  \fill (N6) circle (2pt);
  
  \draw (N7.center) -- (N8.center);
  \fill[white] (N7) circle (2pt);
  \draw (N7) circle (2pt);

  \draw[dotted] (N8.center) -- (N9.center);
  \fill[white] (N8) circle (2pt);
  \draw (N8) circle (2pt);
  
  \draw [thick,decoration={brace,raise=20pt},decorate] (N3.center) -- (N6.center) node [pos=0.5,anchor=north,yshift=40pt] {$\Delta'$};
  
\end{tikzpicture}
$$
Suppose that $\alpha_{i-1}$ and $\alpha_{i}$ are non-marked nodes. If $w_{0}'$ is the longest element (w.r.t. $\Delta$) of the Weyl group of the subsystem subgroup $G(\{\alpha_{i},\dots,\alpha_{i+l}\})$ then $G(\{\alpha_{i+1},\dots,\alpha_{i+l}\})^{\dot w_{0}'}=G(\{\alpha_{i},\dots,\alpha_{i+l-1}\})$. Similarly, if $\alpha_{i+l+1}$ and $\alpha_{i+l+2}$ are not marked, $\Delta'$ can be shifted to the right.

Let $\Delta''=\{\alpha_{i+l+2},\alpha_{i+l+3},\dots,\alpha_{i+l+1+k}\}$ be a second component of $D_I$.
  $$
\begin{tikzpicture}[scale=.8]
  \node (N0) at (0,0) {};
  \node[label=$\alpha_{i}$] (N1) at (1,0) {};
  \node[label=$\alpha_{i+1}$] (N2) at (2,0) {};
  \node (N3) at (3,0) {};
  \node (N4) at (4,0) {};
  \node[label=$\alpha_{i+l}$] (N5) at (5,0) {};
  \node (N6) at (6,0) {};
  \node[label=$\alpha_{i+l+2}$] (N7) at (7,0) {};
  \node (N8) at (8,0) {};
  \node (N9) at (9,0) {};
  \node[label=$\alpha_{i+l+1+k}$] (N10) at (10,0) {};
  \node (N11) at (11,0) {};
  \node (N12) at (12,0) {};

  \draw[dotted] (N0.center) -- (N1.center);

  \draw (N1.center) -- (N2.center);
  \fill[white] (N1) circle (2pt);
  \draw (N1) circle (2pt);
  
  \draw (N2.center) -- (N3.center);
  \fill (N2) circle (2pt);
  
  \draw[dotted] (N3.center) -- (N4.center);
  \fill (N3) circle (2pt);

  \draw (N4.center) -- (N5.center);
  \fill (N4) circle (2pt);

  \draw (N5.center) -- (N6.center);
  \fill (N5) circle (2pt);

  \draw (N6.center) -- (N7.center);
  \fill[white] (N6) circle (2pt);
  \draw (N6) circle (2pt);
  
  \draw (N7.center) -- (N8.center);
  \fill (N7) circle (2pt);

  \draw[dotted] (N8.center) -- (N9.center);
  \fill (N8) circle (2pt);

  \draw (N9.center) -- (N10.center);
  \fill (N9) circle (2pt);

  \draw (N10.center) -- (N11.center);
  \fill (N10) circle (2pt);
  
  \draw[dotted] (N11.center) -- (N12.center);
  \fill[white] (N11) circle (2pt);
  \draw (N11) circle (2pt);

  \draw [thick,decoration={brace,raise=20pt},decorate] (N2.center) -- (N5.center) node [pos=0.5,anchor=north,yshift=40pt] {$\Delta'$};
  
  \draw [thick,decoration={brace,raise=20pt},decorate] (N7.center) -- (N10.center) node [pos=0.5,anchor=north,yshift=40pt] {$\Delta''$};
\end{tikzpicture}
$$
If $w_{0}'$ is the longest element (with respect to $\Delta$) of the Weyl group of the subsystem subgroup $G(\Delta'\cup \{\alpha_{i+l+1}\}\cup\Delta'')$ then $G(\{\alpha_{i+1},\dots,\alpha_{i+l}\})^{\dot w_{0}'}=G(\{\alpha_{i+l+1+k},\dots,\alpha_{i+2+k}\})$ and $G(\{\alpha_{i+l+2},\dots,\alpha_{i+l+1+k}\})^{\dot w_{0}'}=G(\{\alpha_{i+k},\dots,\alpha_{i+1}\})$. This proves the claim for $G$ of type $A_r$.

If $G$ is of type $C_r$ or $B_r$ let $\Delta'$ be the component of $D_I$ containing $\alpha_r$ and if $G$ is of type $D_r$ let $\Delta'$ be the component of $D_I$ containing $\alpha_{r}$ and $\alpha_{r-1}$. If such a component exists, then $\Delta'=\{\alpha_{k_0},\dots \alpha_r\}$ for some $1\leq k_0\leq r$. Since $\Delta'$ is a component of both $D_I$ and $D_J$ and since the other components are shifted and permuted freely in the subsystem subgroup $G(\{\alpha_{1},\dots,\alpha_{k_0-2}\})$ of type $A_{k_0-2}$, the claim follows.
  \end{proof}

%\begin{lem}
%  \label{lem_classical_G}
%  Let $G$ be a classical simple algebraic group of rank $\rk(G)>8$, defined over an algebraically closed field of good characteristic. Let $C$ be a unipotent conjugacy class of $G$ corresponding to the pair $(L_I,P_J)$. If $I=\{\alpha_{k_0},\alpha_{k_0+1},\dots,\alpha_r\}$ for some $1< k_0\leq r-1$ then
%  $$
%  \cn(T',C)\leq 80
%  $$
%  where $T'$ is a maximal torus of the subgroup $G(\{\alpha_{k_0-1},\alpha_{k_0},\dots,\alpha_r\})$. %Moreover, $c\leq 132$.
%\end{lem}
%\begin{proof}
%  If $C$ is a unipotent conjugacy class corresponding to the pair $(L_I,P_J)$ then $C^{36}$ contains an open subset of the maximal torus of $[L_I,L_I]$ by the proof of Proposition \ref{classical_G}. The class $C$ contains the Richardson class of the distinguished parabolic subgroup $P_J$. Inspecting the list of distinguished diagrams one finds that in all cases the root $\alpha_{k_0}$ is marked and is of the same length as $\alpha_{k_0-1}$. Hence $U_{\alpha_{k_0}}$ is in the unipotent radical $Q_J$ of $P_J$. Since $Q_J\subseteq C^2$, by \cite[Theorem 2.1]{Gordeev2000} the normal subset $C^{4}$ contains an open subset of $T\cap G_{\alpha_{k_0}}$ which is conjugate to $T\cap G_{\alpha_{k_0-1}}$. Thus $C^4C^{36}$ contains an open subset of the maximal torus of the subgroup $G(\{\alpha_{k_0-1},\alpha_{k_0},\dots,\alpha_r\})$ which is contained in $T$.
%  \end{proof}

\begin{prop}
  \label{rank_classical}
  Let $G$ be a classical simple algebraic group of rank $\rk(G)>11$ defined over an algebraically closed field of good characteristic. If $C$ is a unipotent conjugacy class of $G$ corresponding to the pair $(L_I,P_J)$ then
  $$
  \cn(G,C)\leq c\cdot \frac{\rk(G)}{\rk(C)}=
  c\cdot \left(1+\frac{\crk(C)}{\rk{(C)}}\right).
  $$
  Moreover, we may choose $c\leq 288$.%$c\leq 432$.%$c\leq 864$.
\end{prop}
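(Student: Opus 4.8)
The plan is to produce, inside a controlled power of $C$, a maximal torus of $G$, and then to invoke the regular semisimple covering bound. By Proposition \ref{rank_Levi_center} the derived group $[L_I,L_I]$ is semisimple of rank $\rk(C)=|I|$, its simple factors correspond to the components of the marked diagram $D_{\Delta-I}$, and the Richardson element $u$ of $P_J$ is distinguished in $[L_I,L_I]$, hence distinguished in each simple factor $H_j$. First I would treat one factor at a time: for a factor of rank at most $11$ I would use the Gordeev--Saxl bound $\cn(H_j,C_j)\le 4\cdot\rk(H_j)\le 44$ from \cite{Gordeev_Saxl_1}, and for a classical factor of rank exceeding $11$ I would apply Proposition \ref{classical_G}, in each case placing a maximal torus of $H_j$ inside a bounded power $C_j^{k_0}$ of the corresponding factor-class. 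Since the factors $H_j$ commute and $\prod_j C_j$ is contained in the $G$-class $C$, multiplying these together shows that $C^{k_0}$ contains a maximal torus $S$ of $[L_I,L_I]$, of dimension $\rk(C)$, for an absolute constant $k_0$ (good characteristic for $G$ is inherited by the subsystem subgroups, so the cited results apply).

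The second step is to \emph{tile} the ambient rank by conjugates of $S$. The torus $S$ occupies only the coordinates indexed by $I$, so using Lemma \ref{lem_perm_diag} I would shift and permute the components of $D_{\Delta-I}$ to obtain Levi subgroups $G$-conjugate to $L_I$, whose maximal tori are Weyl conjugates $S^{\dot w}$ of $S$ sitting in different positions of $T$. Choosing roughly $m=\lceil\rk(G)/\rk(C)\rceil$ such conjugates in sufficiently general position, the product $S\cdot S^{\dot w_1}\cdots S^{\dot w_{m-1}}$ becomes a full-dimensional torus of $T$. Because $C$, and hence each power $C^{k_0}$, is conjugation-invariant, every $S^{\dot w_i}$ again lies in $C^{k_0}$, so this product lies in $C^{k_0 m}$ with $m\le 2\,\rk(G)/\rk(C)$. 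A maximal torus of $G$ contains a regular semisimple element, whence Lemma \ref{regular_ss} (or the Steinberg density argument used in the proof of Theorem \ref{thm_distinguished}) gives $G=C^{3k_0m}$; tracking $k_0$, the bound on $m$, and the final multiplier then yields $\cn(G,C)\le c\cdot\rk(G)/\rk(C)$ with $c\le 288$.

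The hard part will be the combinatorics of the tiling, where both a conjugacy and a transversality requirement must be met simultaneously. On the one hand, each shift must be a genuine conjugation, i.e. the shift and permutation hypotheses of Lemma \ref{lem_perm_diag} must hold; this requires enough unmarked nodes between the components and is exactly where the hypothesis $\rk(G)>11$ is used to exclude the degenerate small configurations, just as in Proposition \ref{classical_G}. On the other hand, the tori $S^{\dot w_i}$ must be in sufficiently general position that their product attains the full dimension $\rk(G)$ while the number $m$ of copies stays bounded by a constant times $\rk(G)/\rk(C)$. I expect a case-by-case inspection of the classical root systems, exploiting the freedom to shift components through the unmarked coordinates (the $\crk(C)$ central directions), to settle both points and to keep $m$ within the required range.
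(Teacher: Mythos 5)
Your overall architecture is the same as the paper's: put a maximal torus of $[L_I,L_I]$ into a bounded power of $C$, translate it by Weyl elements until the product is a maximal torus of $G$, with the number of translates controlled by $\rk(G)/\rk(C)$, and finish via regular semisimple elements. Your first step is correct and in fact more careful than the paper's own wording (the paper cites Proposition \ref{classical_G} for the semisimple group $[L_I,L_I]$, whose simple factors may well have rank at most $11$; your splitting into factors, with the Gordeev--Saxl bound for the small ones and Proposition \ref{classical_G} for the large ones, handles exactly this), and your closing step via Lemma \ref{regular_ss} is a legitimate variant of the paper's argument, which instead doubles an open subset of $T$; the arithmetic $3\cdot 44\cdot 2=264\leq 288$ also works out.

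The gap is in the tiling step, and it is not merely ``combinatorics left to the reader'': the mechanism you prescribe would fail. You generate the conjugate tori $S^{\dot w_i}$ exclusively through Lemma \ref{lem_perm_diag}, so each of them is the maximal torus of a standard Levi $L_J$ with $J$ obtained from $I$ by shifting and permuting components. But shifts and permutations must preserve the isomorphism type of the Levi, so in types $B_r$ and $C_r$ a component of $I$ can never be moved onto the end node $\alpha_r$ (long versus short roots; the proof of Lemma \ref{lem_perm_diag} explicitly confines all moves to the type~$A$ subdiagram). Concretely, take $G$ of type $C_r$ and $I=\{\alpha_1\}$: the standard Levis conjugate to $L_I$ that the lemma can reach are exactly $L_{\{\alpha_j\}}$ with $j\leq r-1$, so every torus you can form lies in the $(r-1)$-dimensional subtorus of $T$ whose cocharacter space is $\langle\alpha_1^{\vee},\dots,\alpha_{r-1}^{\vee}\rangle$; no number of such conjugates, in however general position, yields a torus of dimension $r$. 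This is precisely why the paper adjoins $\alpha_r$ to form $\hat I$, conjugates $\tilde T$ by the product of reflections in the unmarked nodes of $\hat I-I$ --- producing tori involving coroots such as $(\alpha_{r-1}+\alpha_r)^{\vee}$, which are \emph{not} tori of any standard Levi conjugate to $L_I$ --- and then translates with the nested longest elements $w_i$, treating the end node separately ($T''$ versus $T'\cap G_{\alpha_r}$). Without conjugates outside the range of Lemma \ref{lem_perm_diag}, your transversality requirement is unattainable. A minor additional correction: the hypothesis $\rk(G)>11$ is needed to invoke Proposition \ref{classical_G}, not to rule out degenerate configurations in the tiling.
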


\begin{proof}
  Consider the marked diagram $D_{\Delta-I}$ with set of marked nodes $I$. With Lemma \ref{lem_perm_diag}, conjugating, we may shift the components of $D_{\Delta-I}$ to the right of the diagram such that they are separated by exactly one non-marked node. Let $\hat I$ be the minimal subset of simple roots spanning an irreducible root subsystem containing $I$ and $\alpha_r$. Then $L_I$ is included in $L_{\hat I}$. Notice that under this operation of moving all components to the right, it may happen that $\alpha_r$ is non-marked, in which case it is easy to see that $\alpha_{r-1}$ is marked. %, and $|\hat I|\leq 2|I|$.
  The sets $I$ and $\hat I$ may be visualised as follows
$$
\begin{tikzpicture}[scale=.8]
  \node[label=$\alpha_1$] (N0) at (0,0) {};
  \node[label=$\alpha_2$] (N1) at (1,0) {};
  \node (N2) at (2,0) {};
  \node (N3) at (3,0) {};
  \node (N4) at (4,0) {};
  \node (N5) at (5,0) {};
  \node (N6) at (6,0) {};
  \node (N7) at (7,0) {};
  \node (N8) at (8,0) {};
  \node (N9) at (9,0) {};
  \node (N10) at (10,0) {};
  \node (N11) at (11,0) {};
  \node (N12) at (12,0) {};
  \node (N13) at (13,0) {};
  \node[label=$\alpha_r$] (N14) at (14,0) {};
%  \node (N15) at (15,0) {};
%  \node (N16) at (16,0) {};
%  \node[label=$\alpha_r$] (N17) at (17,0) {};

  \draw (N0.center) -- (N1.center);
  \fill[white] (N0) circle (2pt);
  \draw (N0) circle (2pt);

  \draw[dotted] (N1.center) -- (N2.center);
  \fill[white] (N1) circle (2pt);
  \draw (N1) circle (2pt);

  \draw (N2.center) -- (N3.center);
  \fill[white] (N2) circle (2pt);
  \draw (N2) circle (2pt);
  
  \draw[dotted] (N3.center) -- (N4.center);
  \fill (N3) circle (2pt);

  \draw (N4.center) -- (N5.center);
  \fill (N4) circle (2pt);

  \draw (N5.center) -- (N6.center);
  \fill (N5) circle (2pt);

  \draw (N6.center) -- (N7.center);
  \fill[white] (N6) circle (2pt);
  \draw (N6) circle (2pt);
  
  \draw[dotted] (N7.center) -- (N8.center);
  \fill (N7) circle (2pt);
  %\draw (N7) circle (2pt);

  \draw (N8.center) -- (N9.center);
  \fill (N8) circle (2pt);
  %\draw (N8) circle (2pt);

  \draw (N9.center) -- (N10.center);
  \fill (N9) circle (2pt);

  \draw[dotted] (N10.center) -- (N11.center);
  \fill[white] (N10) circle (2pt);
  \draw (N10) circle (2pt);
  
  \draw[dotted] (N11.center) -- (N12.center);
  \fill (N11) circle (2pt);

  \draw (N12.center) -- (N13.center);
  \fill (N12) circle (2pt);
  %\draw (N12) circle (2pt);

  \draw[dashed] (N13.center) -- (N14.center);
  \fill (N13) circle (2pt);
  %\draw (N13) circle (2pt);

  %\draw(N14.center) -- (N15.center);
  %\fill[white] (N14) circle (2pt);
  %\draw (N14) circle (2pt);

  %\draw[dashed] (N15.center) -- (N16.center);
  %\fill (N15) circle (2pt);

  %\draw[dashed] (N16.center) -- (N17.center);
  %\fill (N16) circle (2pt);

%  \draw [thick,decoration={brace,mirror,raise=10pt},decorate] (N0.center) -- (N2.center) node [pos=0.5,anchor=north,yshift=-17pt] {$\Delta^w$};
  \draw [thick,decoration={brace,mirror,raise=10pt},decorate] (N3.center) -- (N5.center) node [pos=0.5,anchor=north,yshift=-17pt] {$\Delta_1^b$};
%  \draw [thick,decoration={brace,raise=20pt},decorate] (N0.center) -- (N5.center) node [pos=0.5,anchor=north,yshift=40pt] {$\Delta_{1}$};
  
  \draw [thick,decoration={brace,mirror,raise=10pt},decorate] (N7.center) -- (N9.center) node [pos=0.5,anchor=north,yshift=-17pt] {$\Delta_{2}^{b}$};
%  \draw [thick,decoration={brace,mirror,raise=10pt},decorate] (N9.center) -- (N11.center) node [pos=0.5,anchor=north,yshift=-17pt] {$\Delta_{k-1}^{b}$};
  %\draw [thick,decoration={brace,raise=20pt},decorate] (N6.center) -- (N11.center) node [pos=0.5,anchor=north,yshift=40pt] {$\Delta_{k-1}$};

  \draw [thick,decoration={brace,mirror,raise=10pt},decorate] (N11.center) -- (N14.center) node [pos=0.5,anchor=north,yshift=-17pt] {$\Delta_{k}^{b}$};
  %\draw [thick,decoration={brace,mirror,raise=10pt},decorate] (N15.center) -- (N17.center) node [pos=0.5,anchor=north,yshift=-17pt] {$\Delta_{k}^{b}$};
  \draw [thick,decoration={brace,raise=20pt},decorate] (N3.center) -- (N14.center) node [pos=0.5,anchor=north,yshift=40pt] {$\hat I$};

\end{tikzpicture}
$$
where the dashed line is one of the following diagrams
\begin{figure}[h!]
  \centering
  \hfill
  \begin{subfigure}[h]{0.2\textwidth}
    \centering
    \begin{tikzpicture}[scale=.9]
      \node(N0) at (0,0) {};
      \node[label=$\alpha_{r}$] (N1) at (1,0) {};
      \node (N2) at (2,0) {};
      \node (N3) at (3,0) {};
      \draw (N0.center) -- (N1.center);
      \draw (N0.center) -- (N1.center);
      \fill (N1) circle (2pt);
    \end{tikzpicture}
    %\caption{$G_2(a_1)$}
    \label{fig:A}
  \end{subfigure}
  \begin{subfigure}[h]{0.2\textwidth}
    \centering
    \begin{tikzpicture}[scale=.9]
      \node(N0) at (0,0) {};a
      \node[label=$\alpha_r$] (N1) at (1,0) {};
      \node (N2) at (2,0) {};
      \node (N3) at (3,0) {};
      \draw[double distance=3.5pt] (N0.center) -- (N1.center);
      \fill (N1) circle (2pt);
         
      \node(N3) at (0.5,0) {};
      \draw (N3.center) -- (N3.north east);
      \draw (N3.center) -- (N3.south east);
       \end{tikzpicture}
       %\caption{$G_2(a_1)$}
       \label{fig:G2}
  \end{subfigure}
  \begin{subfigure}[h]{0.2\textwidth}
    \centering
    \begin{tikzpicture}[scale=.9]
      \node(N0) at (0,0) {};a
      \node[label=$\alpha_r$] (N1) at (1,0) {};
      \node (N2) at (2,0) {};
      \node (N3) at (3,0) {};
      \draw[double distance=3.5pt] (N0.center) -- (N1.center);
      \fill (N1) circle (2pt);
      
      \node(N3) at (0.5,0) {};
      \draw (N3.north) -- (N3.east);
      \draw (N3.south) -- (N3.east);
    \end{tikzpicture}
    %\caption{$G_2(a_1)$}
    \label{fig:G2}
  \end{subfigure}
  \begin{subfigure}[h]{0.2\textwidth}
    \centering
    \begin{tikzpicture}[scale=.9]
      \node (N0) at (0,0) {};
      \node[label=$\alpha_{r-1}$] (N1) at (1,0.5) {};
      \node[label=$\alpha_{r}$] (N2) at (1,-0.5) {};
      \draw (N0.center) -- (N1.center);
      \draw (N0.center) -- (N2.center);
      \fill (N1) circle (2pt);
      \fill (N2) circle (2pt);
    \end{tikzpicture}
    %\caption{$G_2(a_1)$}
    \label{fig:G2}
  \end{subfigure}
%  \caption{Type nodes}
%  \label{type_nodes}
\end{figure}
%in Figure \ref{type_nodes}

\par\noindent
with the node $\alpha_r$ possibly non-marked. If $\alpha_r$ is marked, it belongs to the last component $\Delta_k^b$ else $\alpha_{r-1}$ belongs to $\Delta_k^{b}$.

By Proposition \ref{classical_G}, the normal subset $C^{36}$ contains a torus $\tilde T\subseteq [L_I,L_I]\cap T$ of dimension $|I|$. Let $\gamma_1,\dots,\gamma_l$ be the simple roots corresponding to the non-marked nodes $\hat I-I$. Let $w$ be the product of the simple reflections in the roots $\gamma_i$. Then $T'=\tilde T\cdot \tilde T^{\dot w}\subseteq C^{72}$ is a torus of dimension $|\hat I|$ in $T\cap G(\hat I)$ (the Lie algebra of this torus has dimension $|\hat I|$). If only $\alpha_r$ is marked, we may assume that $\hat I=\{\alpha_{r-1},\alpha_r\}$. Let $T''=G(\hat I-\{\alpha_r\})\cap T'$ and notice that $T'\cap G_{\alpha_r}$ is a torus of dimension $1$.

Divide the set of simple roots in $\Delta-\hat I$ into subsets $J_1,J_2,\dots, J_s$ of consecutive roots with $|J_1|\leq |I|$ and $|J_j|=|I|$ for $j\geq 2$. For $1\leq i\leq s$ let $w_i$ be the longest element of the Weyl group generated by the reflections in the roots $J_i\cup J_{i+1}\cup\dots\cup J_s\cup (\hat I-\{\alpha_r\})$. Then
$$
T''(T'')^{\dot w_1}(T'')^{\dot w_2}\cdots (T'')^{\dot w_s}
$$
is a torus in $G(\Delta-\{\alpha_r\})$ of dimension $r-1$ (the Lie algebra of this torus has the right dimension). Thus, since $T'$ contains a $1$-dimensional torus of $G_{\alpha_r}$,
$$
T'(T')^{\dot w_1}(T')^{\dot w_2}\cdots (T')^{\dot w_s}\subseteq C^{72\cdot (s+1)}
$$
is an $r$-dimensional torus of $G$. In particular, $C^{72\cdot (s+1)}$ contains an open subset of $T$ and therefore $C^{144\cdot (s+1)}=G$. Moreover
$$
\cn(G,C)
\leq 144\cdot (s+1)
\leq 144\left(\frac{r}{|I|}+1\right)
%\leq 144\left(\frac{r}{|\hat I|}+1\right)
\leq 144\cdot 2\frac{r}{|I|}
%\leq 288\frac{r}{|I|}
= 288\frac{\rk(G)}{\rk(C)}
.
$$
\end{proof}

\begin{proof}[Proof of Theorem \ref{thm_A}]
  For the bounded rank case we use \cite{Gordeev_Saxl_1}: if $\rk(G)\leq 11$ then, for any conjugacy class $C$ of $G$ we have $\cn(C)\leq 4\cdot\rk(G)\leq 4\cdot\rk(G)\cdot \frac{\rk(G)}{\rk(C)}\leq 44\cdot \frac{\rk(G)}{\rk(C)}$. Thus we may assume that $G$ is a classical group of rank greater than $11$ and the result follows form Proposition \ref{rank_classical}.
  \end{proof}

\section{Covering numbers of unipotent conjugacy classes in terms of dimension}
\label{sec_B}

In this section we prove Theorem \ref{thm_B}. For the bounded rank case we use \cite{Gordeev_Saxl_1}: if $\rk(G)\leq 8$ then, for any conjugacy class $C$ of $G$ we have $\cn(C)\leq 4\cdot\rk(G)\leq 4\cdot \frac{\dim(G)}{\dim(C)}$. Thus we may assume that $\rk(G)>8$ in which case $G$ is a classical group.

Let $C$ be a unipotent conjugacy class of $G$ corresponding to the pair $(L_I,P_J)$.
%Let $r=\rk(G)$, $r_w=\crk(C)$ and $r_b=\rk(C)$.
%  where $G(\Phi_I)^{G}$ denotes the union of conjugacy classes intersecting $G(\Phi_I)$ and $x=\dots$.
Let $D_{\Delta-I}$ be the marked diagram with marked nodes $I$. With Lemma \ref{lem_perm_diag}, conjugating we may shift the components of $D_{\Delta-I}$ to the right of the diagram such that they are separated by exactly one non-marked node. As in the proof of Proposition \ref{rank_classical}, let $\hat I$ be the minimal subset of simple roots spanning an irreducible root subsystem containing $I$ and $\alpha_r$. Then $L_I$ is included in $L_{\hat I}$ and $|\hat I|\leq 2|I|$.
  
Let $u\in L_I$ be a representative of $C$. Let $r=\rk(G)$, $\hat r_b=|\hat I|$ and $\hat r_w=r-\hat r_b$. Since $u\in [L_I,L_I]\subseteq [L_{\hat I},L_{\hat I}]$ there is an $A_{\hat r_w-1}$ subsystem subgroup of $G$ in $C_G(u)$. Hence
$$
%\dim(G)-(\hat r_w-1)^2\geq \dim(N).
\dim(G)-\hat r_w^2+1\geq \dim(C).
$$
Thus, if $G$ is of type $A_r$ then
$$
6r\hat r_b%= 3r(r-\hat r_w)\geq(r-\hat r_w)(r+\hat r_w+2)
\geq r^2+2r-\hat r_w^2+1
\geq \dim(C).
%3r\hat r_b= 3r(r-\hat r_w)\geq(r-\hat r_w)(r+\hat r_w+2)\geq r(r+2)-\hat r_w(\hat r_w+2)\geq \dim(N).
$$
For the other classical groups we use a maximal closed subsystem subgroup $H\times [L_{\hat I},L_{\hat I}]$ (recognized with the algorithm of Borel and de Siebenthal, see for example \cite[Theorem 13.12]{Malle_Testerman}). Notice that since $u\in [L_{\hat I},L_{\hat I}]$ we have $H\subseteq C_G(u)$ and so
$$
\dim(G)-\dim(H)\geq \dim(C).
$$
If $G$ is of type $B_r$ then $H$ is of type $D_{\hat r_w}$ and
$$
8r\hat r_b
\geq
2(r-\hat r_w+\frac{1}{2})(r+\hat r_w)
=
2r^2+r-2\hat r_w^2+\hat r_w\geq\dim(C).
$$
If $G$ is of type $C_r$ then $H$ is of type $C_{\hat r_w}$ and
$$
8r\hat r_b
\geq
2(r-\hat r_w)(r+\hat r_w+\frac{1}{2})
=
2r^2+r-2\hat r_w^2-\hat r_w\geq\dim(C).
$$
If $G$ is of type $D_r$ then $H$ is of type $D_{\hat r_w}$ and
$$
4r\hat r_b
\geq
2(r-\hat r_w)(r+\hat r_w)
\geq
2r^2-r-2\hat r_w^2+\hat r_w\geq\dim(C).
$$
%Let $r_b=\rk(C)$.
In all cases we have
$$
\frac{\rk(G)}{16\rk(C)}=
\frac{r^2}{8(2\rk(C)) r}
\leq
\frac{r^2}{8\hat r_b r}
%\leq
%\frac{\dim(G)}{\dim(N)}
\leq
\frac{\dim(G)}{\dim(C)}
$$
since $\hat r_b=|\hat I|\leq 2|I|=2\rk(C)$. Thus, by Theorem \ref{thm_A}, $\cn(G,C)\leq 4608\cdot \dim(G)/\dim(C)$.

\section{Conflict of Interest}
The author declares that he has no conflict of interest.

\end{document}